\def\thm@space@setup{%
  \thm@preskip=1cm plus .5cm minus .5cm
  \thm@postskip=.5cm plus .6cm minus .5cm 
}
\newtheorem{thm}{Theorem}
\newtheorem{lma}{Lemma}
\newtheorem{prop}{Proposition}
\numberwithin{thm}{section}
\numberwithin{lma}{section}
\numberwithin{dfn}{section}
\numberwithin{cor}{section}
\numberwithin{rmk}{section}
\numberwithin{prop}{section}
\newcommand*{\thmref}[1]{Theorem~\ref{#1}}
\newcommand*{\lmaref}[1]{Lemma~\ref{#1}}
\newcommand*{\propref}[1]{Proposition~\ref{#1}}
\title{On the number of irreducible factors with a given multiplicity in function fields}
\date{}
\begin{document}

\author{Sourabhashis Das, Ertan Elma, Wentang Kuo, Yu-Ru Liu}

\newcommand{\Addresses}{{
  \bigskip
  \footnotesize

  Sourabhashis Das (Corresponding author), Department of Pure Mathematics, University of Waterloo, 200 University Avenue West, Waterloo, Ontario, Canada, N2L 3G1. \\
  Email address: \texttt{s57das@uwaterloo.ca}

  \medskip

  Ertan Elma, Department of Mathematics and Computer Science, University of Lethbridge, 4401 University Drive, Lethbridge, Alberta,  Canada, T1K 3M4. \\
  Email address: \texttt{ertan.elma@uleth.ca}

  \medskip

  Wentang Kuo, Department of Pure Mathematics, University of Waterloo, 200 University Avenue West, Waterloo, Ontario, Canada, N2L 3G1. \\
  Email address: \texttt{wtkuo@uwaterloo.ca}
  
  \medskip

  Yu-Ru Liu, Department of Pure Mathematics, University of Waterloo, 200 University Avenue West, Waterloo, Ontario, Canada, N2L 3G1. \\
  Email address: \texttt{yrliu@uwaterloo.ca}

}}

%
%
%

\maketitle 

\begin{abstract}
Let $k \geq 1$ be a natural number and $f \in \mathbb{F}_q[t]$ be a monic polynomial. Let $\omega_k(f)$ denote the number of distinct monic irreducible factors of $f$ with multiplicity $k$. We obtain asymptotic estimates for the first and the second moments of $\omega_k(f)$ with $k \geq 1$. Moreover, we prove that the function $\omega_1(f)$ has  normal order $\log (\text{deg}(f))$ and also satisfies the Erd\H{o}s-Kac Theorem. Finally, we prove that the functions $\omega_k(f)$ with $k \geq 2$ do not have normal order.
\end{abstract}

\section{Introduction}

In\footnotetext{\textbf{2020 Mathematics Subject Classification: 11T06, 11N37, 11N56.}} 1940,\footnotetext{\textbf{Keywords: monic irreducible factors, normal order, Erd\H{o}s-Kac theorem.}} Erd\H{o}s and Kac,\footnotetext{The research of W. Kuo and Y.-R. Liu are supported by NSERC discovery grants. The authors also want to thank M. Lal\'in and Z. Zhang for the helpful discussion.} \cite{ErdosKac}, proved the remarkable result that the number of distinct prime factors, $\omega(n)$, of a natural number $n$ obeys the Gaussian law in the sense that
\begin{align}\label{ErdosKacResult}
	\lim_{x\rightarrow \infty}\frac{1}{x}\left|\lbrace  3\leqslant n \leqslant x : a\leqslant \frac{\omega(n)-\log\log n}{\sqrt{\log\log n}}\leqslant b   \rbrace\right|=
	\Phi(b) - \Phi(a)
\end{align}
where
\begin{equation}\label{Nadef}
		\Phi(t) = \frac{1}{\sqrt{2 \pi}} \int_{-\infty}^t e^{-\frac{u^2}{2}} du.
	\end{equation}
	Various approaches to the Erd\H{o}s-Kac theorem have been pursued, for example, see \cite{pb}, \cite{gs}, \cite{hh1}, \cite{hh2}, \cite{hh3}, \cite{jala2}.
	
In \cite{el}, the second and the fourth authors considered a refinement of the function $\omega(n)$ in the following way. Let $\omega_k(n)$ be the number of distinct prime factors of $n$ with multiplicity $k\geqslant 1$. In \cite{el}, the first and the second moments of $\omega_k(n)$ are obtained. Moreover, they showed that $\omega_1(n)$ has normal order\footnote{Let $f,F:\mathbb{N}\rightarrow \mathbb{R}_{\geqslant 0}$ be two functions such that $F$ is non-decreasing. Then $f(n)$ is said to have \textit{normal order} $F(n)$ if for any $\epsilon>0$, the number of $n\leqslant x$ that do not satisfy the inequality
	\begin{align*}
		(1-\epsilon)F(n) \leq f(n) \leq (1+\epsilon)F(n)
	\end{align*}
	is $o(x)$ as $x\rightarrow \infty$.} $\log\log n$, that $\omega_1(n)$ satisfies the Gaussian law given in \eqref{ErdosKacResult}, and that the function $\omega_k(n)$ with $k\geqslant 2$ does not have normal order $F(n)$ for any nondecreasing nonnegative function $F(n)$.

In this work, we consider the refined version of the function field analogue of $\omega(n)$. Let $\mathbb{F}_q[t]$ be the ring of polynomials in the variable $t$ with coefficients from the finite field $\mathbb{F}_q$ with $q$ elements. Let $M$ be the set of monic polynomials in $\mathbb{F}_q[t]$ with degree $\text{deg}(f)\geqslant 1$ and $P$ be the set of monic irreducible polynomials in $\mathbb{F}_q[t]$ which is the analogue of prime numbers in the ring $\mathbb{Z}$. For $f\in\mathbb{F}_q[t]$, define $\left|f\right|:=q^{\deg(f)}$.


For a monic polynomial $f\in M$ and a monic irreducible $l\in P$, let $\nu_l(f)$ be the multiplicity of $l$ in the unique factorization of $f$, that is, $\nu_l(f)$ is the largest nonnegative integer such that $l^{{\nu_l(f)}}\mid f$ but $l^{\nu_l(f)+1}\nmid f$. Let 
\begin{align*}
	\omega(f):=\sum_{l\mid f}1
\end{align*}
which counts the number of distinct monic irreducible factors $l\in P$ of $f\in M$, and 
\begin{align*}
	\omega_k(f):=\sum_{\substack{l\mid f\\\nu_l(f)=k}}1, \quad (k\geqslant 1)
\end{align*}
which counts the number of distinct monic irreducible factors of $f$ with a given multiplicity $k\geqslant 1$. Note that the set of functions $\omega_k(f)$ with $k\geqslant 1$ is a refinement of $\omega(f)$ in the sense that 
\begin{align*}
	\omega(f)=\sum_{k\geqslant 1} \omega_k(f)
\end{align*}
for all $f\in M$. 

For a natural number $n\geqslant 1$, let $M_n$ be the set of monic polynomials in $\mathbb{F}_q[t]$ with degree $n$. By the work of A. Knopfmacher and J. Knopfmacher, \cite[Theorems 1 and 4 or the comment on Page 111]{kk},  
the mean value of $\omega(f)$ over all polynomials in $M_n$ is given by
\begin{equation}\label{first_moment_omega}
	\sum_{f \in M_n} \omega(f) = q^n(\log n) + A_1 q^n + O \left( \frac{q^n}{n} \right)
\end{equation}
with
\begin{equation}\label{defA1}
	A_1 = \gamma + c_1,
\end{equation}
where $\gamma$ is the Euler-Mascheroni constant, and $c_1 = \sum_{r=1}^{\infty} (\pi_q(r) - q^r/r) q^{-r}$ with $\pi_q (r)$ denoting the number of monic irreducible polynomials $l\in P$ with degree $\text{deg}(l)=r$. 

First we obtain refinements of the result in (\ref{first_moment_omega}) by estimating the first moment of the functions $\omega_k(\cdot)$ with $k\geqslant1$.
\begin{thm}\label{mainresult_first_moment}
For an integer $m \geq 2$, let
\begin{equation}\label{Lk}
	L(m) := \sum_{l \in P} \frac{1}{|l|^m}.
\end{equation}
We have 
$$\sum_{f \in M_n} \omega_1(f) = q^n (\log n) + q^n (A_1 - L(2)) + O \left( \frac{q^n}{n} \right)$$
where $A_1$ is defined in \eqref{defA1}. Moreover for $k \geq 2$, as $n \rightarrow \infty$, we have
$$\sum_{f \in M_n} \omega_k(f) = (L(k) - L(k+1))  q^n + O \left(n \cdot q^{\frac{n}{k}} \right).$$	
\end{thm}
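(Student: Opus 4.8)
The plan is to evaluate the sum by interchanging the order of summation, writing
$$\sum_{f\in M_n}\omega_k(f)=\sum_{l\in P}\#\{f\in M_n:\nu_l(f)=k\},$$
and then counting, for each fixed monic irreducible $l$ with $r=\deg(l)$, the polynomials $f\in M_n$ of exact multiplicity $k$. The key elementary fact I would use is that the number of $f\in M_n$ divisible by a fixed monic polynomial of degree $d\le n$ is exactly $q^{n-d}$, since factoring out that polynomial leaves a free monic cofactor of degree $n-d$. Hence $\#\{f\in M_n:l^j\mid f\}=q^{n-jr}$ when $jr\le n$ and $0$ otherwise, and taking the difference of the counts for $l^k$ and $l^{k+1}$ gives
$$\#\{f\in M_n:\nu_l(f)=k\}=q^{n-kr}\mathbf{1}_{kr\le n}-q^{n-(k+1)r}\mathbf{1}_{(k+1)r\le n}.$$
Summing over $l$ and collecting irreducibles of common degree through $\pi_q(r)$ then reduces everything to
$$\sum_{f\in M_n}\omega_k(f)=q^n\sum_{r\le n/k}\frac{\pi_q(r)}{q^{kr}}-q^n\sum_{r\le n/(k+1)}\frac{\pi_q(r)}{q^{(k+1)r}}.$$

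For $k\ge 2$ both defining series $L(k)=\sum_r\pi_q(r)q^{-kr}$ and $L(k+1)$ converge, so I would complete each finite sum to its full series, producing the main term $q^n(L(k)-L(k+1))$, and control the two resulting tails $q^n\sum_{r>n/k}\pi_q(r)q^{-kr}$ and $q^n\sum_{r>n/(k+1)}\pi_q(r)q^{-(k+1)r}$. Using the prime polynomial theorem in the form $\pi_q(r)\ll q^r/r$, each tail is a geometric-type series in which the exponent $q^{-(k-1)r}$ (respectively $q^{-kr}$) decays, so the dominant contribution comes from the smallest admissible degree $r\approx n/k$; this yields a tail of size $O(q^{n/k})$, comfortably within the stated error $O(n\,q^{n/k})$.

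The case $k=1$ is the delicate one, since $L(1)$ diverges and the logarithm must be produced. The second sum $q^n\sum_{r\le n/2}\pi_q(r)q^{-2r}$ still converges to $q^nL(2)$ up to an exponentially small tail, so the main term resides in the first sum $q^n\sum_{r\le n}\pi_q(r)q^{-r}$. Here I would split $\pi_q(r)q^{-r}=\tfrac1r+\big(\pi_q(r)-q^r/r\big)q^{-r}$; the harmonic part gives $\sum_{r\le n}\tfrac1r=\log n+\gamma+O(1/n)$, supplying both the logarithm and the Euler-Mascheroni constant, while the correction $\sum_{r\le n}(\pi_q(r)-q^r/r)q^{-r}$ converges to $c_1$ with a tail of size $O(q^{-n/2})$, using $\pi_q(r)-q^r/r=O(q^{r/2})$ from the explicit count $\pi_q(r)=\tfrac1r\sum_{d\mid r}\mu(d)q^{r/d}$. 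Since $A_1=\gamma+c_1$ by \eqref{defA1}, the first sum equals $q^n\log n+A_1q^n+O(q^n/n)$, and subtracting $q^nL(2)$ delivers the claimed asymptotic.

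The main obstacle is precisely this constant extraction in the $k=1$ case: one has to match the harmonic contribution $\gamma$ together with the convergent correction $c_1$ against the Knopfmacher constant $A_1$ appearing in \eqref{first_moment_omega}, and this hinges on estimating $\pi_q(r)-q^r/r$ sharply enough that the tail of the correction series is absorbed into $O(q^n/n)$. By contrast, the $k\ge 2$ estimates are routine tail bounds for absolutely convergent series.
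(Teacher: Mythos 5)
Your proof is correct, but it takes a genuinely different and more elementary route than the paper. You interchange the order of summation and count exactly, using the fact that a fixed monic polynomial of degree $d\le n$ has precisely $q^{n-d}$ monic multiples of degree $n$; this reduces everything to the finite sums $q^n\sum_{r\le n/k}\pi_q(r)q^{-kr}-q^n\sum_{r\le n/(k+1)}\pi_q(r)q^{-(k+1)r}$, which you handle with the prime polynomial theorem, and for $k=1$ your decomposition $\pi_q(r)q^{-r}=\tfrac1r+(\pi_q(r)-q^r/r)q^{-r}$ amounts to reproving the Mertens-type estimate of \lmaref{Lemma_Mertens} along the way. The paper instead writes $\omega_1=\omega-a_1-a_2$ and $\omega_k=a_{k-1}-a_{k+1}$ for auxiliary functions $a_k(f)$ counting irreducible divisors $l$ with $\nu_l(f)\ge k+1$ and $\nu_l(f)-k$ odd, quotes the Knopfmacher--Knopfmacher asymptotic \eqref{first_moment_omega} for $\sum_{f\in M_n}\omega(f)$, and evaluates $\sum_{f\in M_n}a_k(f)$ by a generating-function argument: the associated Dirichlet series factors as $\zeta_q(s)B_{g,k}(s)$, and the coefficient of $u^n$ is extracted by a residue computation on a circle of radius slightly larger than $q^{-1}$ (\lmaref{kkthm1} and \propref{prop1}). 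Your approach avoids the complex analysis entirely and in fact yields a sharper error term for $k\ge2$: the tails $q^n\sum_{r>n/k}\pi_q(r)q^{-kr}$ are $O\!\left(q^{n/k}/n\right)$, beating the stated $O\!\left(n\,q^{n/k}\right)$, which in the paper is an artifact of the choice of radius in the residue lemma. What the paper's heavier machinery buys is a single proposition, uniform in the weight $g$ and in $k$, that is reused later (for instance to bound $\sum_{f}(a_1(f)+a_2(f))$ in the proof of the Erd\H{o}s--Kac theorem for $\omega_1$); for \thmref{mainresult_first_moment} itself your direct count is complete and, if anything, cleaner.
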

For the second moments of $\omega_k(\cdot)$, we obtain the following result.
\begin{thm}\label{mainresult_second_moment}
	Let $A_1$ and $L(m)$ be defined as in \eqref{defA1} and \eqref{Lk}, respectively. As $n \rightarrow \infty$, we have
	$$\sum_{f \in M_n} \omega_1^2(f) = q^n(\log n)^2 + c_2 q^n (\log n) + c_{3} q^n +  O  \left( \frac{q^n (\log n)}{n} \right),$$
	where
	\begin{equation*}
		c_2 := 1 + 2 A_1 - 2 L(2) 
	\end{equation*}
	and
	\begin{equation}\label{c3}
		c_{3} := (A_1 - 2 L(2))(A_1 + 1) - \frac{\pi^2}{6} + (L(2))^2 + 2 L(3) - L(4).
	\end{equation}
	Moreover, for $k\geqslant 2$, we have
	$$\sum_{f \in M_n} \omega_k^2(f) = c_k' q^n +  O \left(n \cdot q^{\frac{n}{k}} \right),$$
	where
	\begin{equation*}
		c_{k}' := (L(k) - L(k+1)) (L(k) - L(k+1) + 1) - L(2k) + 2 L(2k+1) - L(2k+2).
	\end{equation*}
\end{thm}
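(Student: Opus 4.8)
The plan is to linearize $\omega_k$ through divisibility indicators and reduce the second moment to a double sum over pairs of irreducibles. For a monic irreducible $l\in P$ and $f\in M_n$, set $e_k(l,f):=[\,l^{k}\mid f\,]-[\,l^{k+1}\mid f\,]$, where $[\,\cdot\,]$ denotes the indicator of the bracketed condition. Then $e_k(l,f)=[\,\nu_l(f)=k\,]\in\{0,1\}$ and $\omega_k(f)=\sum_{l\in P}e_k(l,f)$. Squaring, using $e_k(l,f)^2=e_k(l,f)$ on the diagonal, and separating the off-diagonal,
\begin{align*}
\sum_{f\in M_n}\omega_k^2(f)=\sum_{f\in M_n}\omega_k(f)+\sum_{l_1\neq l_2}\ \sum_{f\in M_n}e_k(l_1,f)\,e_k(l_2,f),
\end{align*}
where the first sum is the first moment already evaluated in \thmref{mainresult_first_moment}. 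Since the number of $f\in M_n$ divisible by a fixed monic $h$ is $q^{\,n-\deg h}$ when $\deg h\le n$ and $0$ otherwise, and since $\operatorname{lcm}(l_1^a,l_2^b)=l_1^al_2^b$ for $l_1\neq l_2$, expanding the product of the two $e_k$'s gives the exact identity
\begin{align*}
\sum_{l_1\neq l_2}\ \sum_{f\in M_n}e_k(l_1,f)e_k(l_2,f)=q^{n}\sum_{a,b\in\{k,k+1\}}(-1)^{a+b}\sum_{\substack{l_1\neq l_2\\ a\deg l_1+b\deg l_2\le n}}\frac{1}{|l_1|^{a}|l_2|^{b}}.
\end{align*}

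For $k\ge2$ every series $\sum_{l\in P}|l|^{-m}=L(m)$ with $m\ge k$ converges, so I would extend each inner sum to all ordered pairs (dropping the degree constraint) and then subtract the diagonal $l_1=l_2$. The unconstrained double sum factors as $\big(\sum_{l}(|l|^{-k}-|l|^{-(k+1)})\big)^2=(L(k)-L(k+1))^2$, while the removed diagonal equals $\sum_{l}(|l|^{-k}-|l|^{-(k+1)})^2=L(2k)-2L(2k+1)+L(2k+2)$. Adding the first moment $(L(k)-L(k+1))q^n$ from \thmref{mainresult_first_moment} and collecting terms produces exactly the coefficient
\begin{align*}
(L(k)-L(k+1))(L(k)-L(k+1)+1)-L(2k)+2L(2k+1)-L(2k+2)=c_k'.
\end{align*}
The only remaining work is the error: the degree-constrained tails dropped when extending the series, bounded via the prime polynomial theorem estimate $\pi_q(r)\ll q^{r}/r$ by sums of the type $q^{n}\sum_{r_1+r_2>n/k}q^{(1-k)(r_1+r_2)}/(r_1r_2)$, contribute $O(n\,q^{n/k})$, which also absorbs the error inherited from the first moment.

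The case $k=1$ is the substantial one, because $L(1)$ diverges and the main term is of size $(\log n)^2$. The essential input, from the prime polynomial theorem and the definition of $A_1$ in \eqref{defA1}, is
\begin{align*}
\sum_{\deg l\le N}\frac{1}{|l|}=\log N+A_1+O\!\left(\frac1N\right).
\end{align*}
For the dominant term $(a,b)=(1,1)$ I would evaluate the constrained convolution $\sum_{\deg l_1+\deg l_2\le n}|l_1|^{-1}|l_2|^{-1}=\sum_{1\le j\le n-1}\big(\pi_q(j)q^{-j}\big)\big(\log(n-j)+A_1\big)+O(\log n/n)$ by partial summation; its leading part is $(\log n)^2+2A_1\log n$ and its constant, after removing the diagonal $\sum_l|l|^{-2}=L(2)$, is $A_1^2-\pi^2/6-L(2)$, the $-\pi^2/6$ being the discrete analogue of $\int_0^1 x^{-1}\log(1-x)\,dx=-\pi^2/6$ arising from $\sum_{j}j^{-1}\log(1-j/n)$. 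The two cross terms $(1,2)$ and $(2,1)$ each have a convergent inner variable summing to $L(2)$, and contribute $-2L(2)\log n$ at leading order (hence the $-2L(2)$ in $c_2$) together with $-2A_1L(2)+2L(3)$ in the constant; the $(2,2)$ term, being absolutely convergent, contributes $(L(2))^2-L(4)$. Adding the first moment $q^n\log n+q^n(A_1-L(2))$ and matching coefficients yields $c_2=1+2A_1-2L(2)$ and the stated value of $c_3$.

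I expect the principal obstacle to be the constant term $c_3$ in the case $k=1$: extracting $-\pi^2/6$ rigorously from the constrained convolution while simultaneously tracking, to the required accuracy $O(q^n(\log n)/n)$, the interaction between the diagonal removals (which supply $+2L(3)-L(4)$ and the $(L(2))^2$ coupling) and the convergent tails of $L(2),L(3),L(4)$. The error bookkeeping here is delicate because three different degree constraints appear among the four terms. By contrast, once the indicator decomposition is in place, the cases $k\ge2$ reduce to manipulations of absolutely convergent series together with a routine geometric tail estimate.
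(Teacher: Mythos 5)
Your proposal is correct and takes essentially the same route as the paper: your indicator decomposition $e_k(l,f)=[\,l^k\mid f\,]-[\,l^{k+1}\mid f\,]$ is a notational repackaging of the paper's expansion of $\sum_{l\neq h,\ l^k\|f,\ h^k\|f}1$ into the four divisibility counts, and your treatment of the resulting constrained double sums (Mertens' theorem plus the $-\pi^2/6$ Riemann-sum evaluation of $\sum_m m^{-1}\log(1-m/n)$ for $k=1$; extension to absolutely convergent series minus the diagonal, with geometric tail bounds, for $k\geq 2$) is exactly the paper's argument, yielding the same $c_2$, $c_3$ and $c_k'$.
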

Note that the error terms for $\omega_k$ with $k \geq 2$ as in \thmref{mainresult_first_moment} and \thmref{mainresult_second_moment} improves upon their respective error terms in the integer case (see \cite[Theorem 1.1 and Theorem 1.2]{el}) by at least a factor of $\frac{(k-1)^2}{k(3k-1)}$ in the exponent.

Let $g, G : M \rightarrow \mathbb{R}_{\geq 0}$ be two functions. We say $G(f)$ is non-decreasing if $G(f) \geq G(h)$ for all $f,h$ with $\textnormal{deg}(f) \geq \textnormal{deg}(h)$. Then $g(f)$ is said to have normal order $G(f)$ for a non-decreasing function $G(f)$ if for any $\epsilon >0$, the number of polynomials $f$ with degree $n$ that do not satisfy the inequality
	$$(1-\epsilon) G(f) \leq g(f) \leq (1+\epsilon) G(f)$$
	is $o(q^n)$ as $n \rightarrow \infty$.
	
The first and the second moment estimates in \thmref{mainresult_first_moment} and \thmref{mainresult_second_moment} allow us to obtain the normal order of $\omega_1(\cdot)$.
\begin{thm}\label{mainresult_normal_order_k=1}
Let $c_3$ be defined as in \eqref{c3}. As $n \rightarrow \infty$, we have
$$\sum_{f \in M_n} (\omega_1(f) - \log n)^2 = q^n \log n + c_3 q^n + O \left( \frac{q^n \log n}{n} \right).$$
Let $\epsilon' \in (0,1/2)$. Then the number of monic polynomials $f$ of degree $n$ such that
$$\frac{|\omega_1(f) - \log n|}{\sqrt{\log n}} \geq (\log n)^{\epsilon'}$$
is $o(q^n)$ as $n \rightarrow \infty$ and thus $\omega_1(f)$ has normal order $\log\left(\deg(f)\right)$. 	
\end{thm}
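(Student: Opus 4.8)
The plan is to derive the variance-type identity directly from the first and second moment estimates already established, and then to run a second-moment (Chebyshev) argument to obtain the concentration statement, from which normal order follows immediately.

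For the first display, I would expand the square as
$$\sum_{f \in M_n} (\omega_1(f) - \log n)^2 = \sum_{f \in M_n} \omega_1^2(f) - 2(\log n)\sum_{f \in M_n}\omega_1(f) + (\log n)^2 \sum_{f \in M_n} 1,$$
and then substitute the estimate for $\sum_{f\in M_n}\omega_1^2(f)$ from \thmref{mainresult_second_moment}, the estimate for $\sum_{f\in M_n}\omega_1(f)$ from \thmref{mainresult_first_moment}, and the trivial count $\sum_{f\in M_n}1 = q^n$. The key point to verify is the cancellation of the top-order terms: the three $q^n(\log n)^2$ contributions (namely the $+1$, $-2$, $+1$ multiples of $q^n(\log n)^2$) sum to zero, while the coefficient of $q^n \log n$ becomes $c_2 - 2(A_1 - L(2))$, which equals $1$ precisely because $c_2 = 1 + 2A_1 - 2L(2)$. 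The remaining constant multiple of $q^n$ is $c_3 q^n$, and the largest error term is $O(q^n \log n / n)$. This step is routine but slightly delicate bookkeeping of the named constants.

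With the variance identity in hand, I would prove the concentration statement by Chebyshev's inequality. Writing $S$ for the set of $f \in M_n$ with $|\omega_1(f) - \log n| \geq \sqrt{\log n}\,(\log n)^{\epsilon'} = (\log n)^{1/2 + \epsilon'}$, every $f \in S$ contributes at least $(\log n)^{1 + 2\epsilon'}$ to the sum $\sum_{f\in M_n}(\omega_1(f) - \log n)^2$. Hence
$$|S|\,(\log n)^{1 + 2\epsilon'} \leq \sum_{f \in M_n}(\omega_1(f) - \log n)^2 = O(q^n \log n),$$
so $|S| = O\!\left(q^n (\log n)^{-2\epsilon'}\right) = o(q^n)$ since $\epsilon' > 0$. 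This yields the stated bound on the number of $f$ with $|\omega_1(f)-\log n|/\sqrt{\log n} \geq (\log n)^{\epsilon'}$.

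Finally, to deduce normal order $\log(\deg(f)) = \log n$, I would fix $\epsilon > 0$: the polynomials failing $(1-\epsilon)\log n \leq \omega_1(f) \leq (1+\epsilon)\log n$ are exactly those with $|\omega_1(f) - \log n| > \epsilon \log n$. Because $\epsilon' < 1/2$ we have $1/2 + \epsilon' < 1$, so $(\log n)^{1/2 + \epsilon'} < \epsilon \log n$ for all sufficiently large $n$; hence every such failing polynomial lies in $S$, and their number is at most $|S| = o(q^n)$. I expect the main (though mild) obstacle to be purely organizational, namely keeping the constants $A_1$, $L(2)$, $c_2$, $c_3$ straight so that the variance has leading term exactly $q^n \log n$, since it is this normalization together with the restriction $\epsilon' < 1/2$ that makes both the Chebyshev bound and the passage to normal order go through.
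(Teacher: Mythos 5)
Your proposal is correct and follows essentially the same route as the paper: expand the square and substitute the moment estimates from Theorems \ref{mainresult_first_moment} and \ref{mainresult_second_moment} (with the same cancellation $c_2 - 2(A_1 - L(2)) = 1$), then apply a Chebyshev-type bound to the exceptional set and use $\epsilon' < 1/2$ to pass to the normal-order statement. No gaps.
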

However, unlike the function $\omega_1(\cdot)$, we prove that the functions $\omega_k(\cdot)$ with $k\geqslant2$ do not have normal order.
\begin{thm}\label{mainresult_no_normal_order_k>1}
	Let $k \geq 2$ be a fixed integer. Then the function $\omega_k(f)$ does not have normal order $G(f)$ for any non-decreasing function $G: M \rightarrow \mathbb{R}_{\geq 0}$.
\end{thm}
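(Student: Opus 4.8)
The plan is to exhibit two families of monic polynomials of degree $n$, each of positive density in $M_n$, on which $\omega_k$ takes incompatible values, and then to argue that no non-decreasing $G$ can accommodate both. The first point I would record is that the monotonicity hypothesis on $G$ is extremely rigid: since $G(f) \geq G(h)$ whenever $\deg f \geq \deg h$, applying this with $\deg f = \deg h = n$ in both directions forces $G$ to be \emph{constant} on $M_n$. Writing $G_n$ for this common value, $(G_n)_n$ is a non-decreasing sequence of non-negative reals, and a normal order for $\omega_k$ would amount to a single sequence of constants $G_n$ such that, for every $\epsilon>0$, all but $o(q^n)$ of the $f\in M_n$ satisfy $(1-\epsilon)G_n \leq \omega_k(f)\leq (1+\epsilon)G_n$.

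Next I would produce the positive-density family on which $\omega_k$ vanishes. For $k\geq 2$, every squarefree monic polynomial $f$ has all multiplicities $\nu_l(f)\in\{0,1\}$ and hence $\omega_k(f)=0$. Since the number of squarefree monic polynomials of degree $n$ equals $q^n-q^{n-1}$ for $n\geq 2$, this family has density $1-1/q>0$ in $M_n$, so a positive proportion of $f\in M_n$ satisfy $\omega_k(f)=0$. For the complementary family I would invoke the moment asymptotics already in hand. Setting $\mu_k:=L(k)-L(k+1)>0$ and $N_1:=\#\{f\in M_n:\omega_k(f)\geq 1\}$, Cauchy--Schwarz gives
$$\Big(\sum_{f\in M_n}\omega_k(f)\Big)^2 \leq N_1\sum_{f\in M_n}\omega_k(f)^2,$$
so by \thmref{mainresult_first_moment} and \thmref{mainresult_second_moment} we obtain $N_1\geq \big(\mu_k^2/c_k' + o(1)\big)q^n$. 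As $c_k'\geq \mu_k^2>0$ (the mean square dominates the square of the mean), this shows that a positive proportion of $f\in M_n$ also satisfy $\omega_k(f)\geq 1$.

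Finally I would combine these two facts with the rigidity of $G$. Fix $\epsilon=\tfrac12$ and take $n$ large. If $G_n=0$, then every $f$ with $\omega_k(f)\geq 1$ violates the upper bound $\omega_k(f)\leq \tfrac32 G_n = 0$, so the exceptional set contains the second family and has size $\gg q^n$. If instead $G_n>0$, then every $f$ with $\omega_k(f)=0$ violates the lower bound $\omega_k(f)\geq \tfrac12 G_n>0$, so the exceptional set contains the first family and again has size $\gg q^n$. In either case the exceptional set is not $o(q^n)$, contradicting the definition of normal order, whence $\omega_k$ can have no normal order. The only genuinely delicate step, and the one I would be most careful to state precisely, is the first: recognizing that the monotonicity of $G$ collapses it to a single constant on each $M_n$ is exactly what turns the two-family obstruction into a fatal contradiction rather than a mere inconvenience.
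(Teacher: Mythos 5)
Your proof is correct, and its skeleton --- exhibit two positive-proportion subsets of $M_n$, one on which $\omega_k=0$ and one on which $\omega_k\geq 1$, then split on whether $G$ vanishes --- is exactly the paper's. The differences lie in how the two density bounds are obtained. For the family with $\omega_k=0$, the paper sieves out the multiples of $l^k$ over all irreducibles $l$ and needs the explicit numerical estimate $\sum_{l\in P}|l|^{-2}\leq 23/32$ to land on a positive constant; your observation that every squarefree polynomial has $\omega_k(f)=0$ for $k\geq 2$, combined with the exact count $q^n-q^{n-1}$ of squarefree monic polynomials of degree $n\geq 2$, is cleaner and avoids that computation entirely. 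For the family with $\omega_k\geq 1$, the paper constructs it by hand (polynomials with $\nu_h(f)=k$ for a fixed degree-one irreducible $h$ and $\nu_l(f)<k$ for all other $l$), which is elementary and self-contained, whereas your Cauchy--Schwarz argument imports Theorems \ref{mainresult_first_moment} and \ref{mainresult_second_moment}; both are valid, and yours illustrates the general principle that a nonnegative function with mean and second moment both $\asymp q^n$ must be nonzero on a positive proportion of $M_n$ (your justification that $c_k'\geq (L(k)-L(k+1))^2>0$ is the right way to see the denominator is harmless). Your opening remark that the monotonicity hypothesis forces $G$ to be constant on each $M_n$ is correct and is essentially what the paper uses implicitly when it passes from $G(f_0)>0$ to $G(f)>0$ for all $f$ of larger degree.
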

In \cite{wz}, Zhang proved a function field analogue  of the Erd\H{o}s-Kac Theorem that
\begin{equation}\label{kacerdosver1}
\lim_{n \rightarrow \infty} \frac{\left| \left\{ f \ \Big| \ f \in M_m, \ m \leq n, \  \frac{\omega(f) - \log m}{\sqrt{\log m}} \leq a \right\} \right|}{| \{ f \ | \ f \in M_m, \ m \leq n \} |} = \Phi(a),
\end{equation}
	where $|\{ \cdot \}|$ denotes the cardinality of the corresponding set. A generalization of this result can be found in the work of the fourth author \cite{liu} and in the work of Lal\'in and Zhang \cite{lz}. 
	
In our final main result, we prove that the function $\omega_1(\cdot)$ obeys the same Gaussian distribution as in the the Erd\H{o}s-Kac Theorem.
	\begin{thm}\label{mainresult_Erdos_Kac_k=1}
	Let $a \in \mathbb{R}$. Then
	$$\lim_{n \rightarrow \infty} \frac{1}{q^n} \left| \left\{ f \in M_n \ \Big| \ \frac{\omega_1(f) - \log n}{\sqrt{\log n}} \leq a \right\} \right| = \Phi(a),$$
	where $\Phi(a)$ is defined in \eqref{Nadef}.
\end{thm}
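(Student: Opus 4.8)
The plan is to establish convergence in distribution of $(\omega_1(f)-\log n)/\sqrt{\log n}$ to the standard Gaussian, from which the stated convergence of the distribution functions to $\Phi(a)$ follows at every $a\in\mathbb{R}$ by the continuity of $\Phi$. Throughout I regard $f$ as chosen uniformly at random from $M_n$ and write averages over $M_n$ as expectations. The first step is to replace $\omega_1$ by the full factor-counting function $\omega$. Writing $\Omega_{\geq 2}(f):=\omega(f)-\omega_1(f)=\sum_{k\geq 2}\omega_k(f)$ for the number of irreducible factors of $f$ of multiplicity at least $2$, the exact divisor count $\tfrac{1}{q^n}\#\{f\in M_n: l^2\mid f\}=|l|^{-2}$ (valid for $2\deg(l)\leq n$) gives $\tfrac{1}{q^n}\sum_{f\in M_n}\Omega_{\geq 2}(f)\leq \sum_{l\in P}|l|^{-2}=L(2)$, a constant independent of $n$. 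Hence, by Markov's inequality, $\Omega_{\geq 2}(f)/\sqrt{\log n}\to 0$ in probability, and by Slutsky's theorem it suffices to prove that $(\omega(f)-\log n)/\sqrt{\log n}$ converges in distribution to the standard normal on $M_n$.

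For $\omega$ I would use the method of moments together with the exact independence of divisibility events that is special to $\mathbb{F}_q[t]$: for distinct $l_1,\dots,l_s\in P$ with $\deg(l_1\cdots l_s)\leq n$ one has $\tfrac{1}{q^n}\#\{f\in M_n: l_1\cdots l_s\mid f\}=\prod_{i=1}^s |l_i|^{-1}$, since writing $f=(l_1\cdots l_s)g$ counts $q^{n-\sum\deg(l_i)}$ monic $g$. Thus the events $\{l\mid f\}$ are genuinely independent so long as the total degree stays at most $n$. I would then truncate, setting $y:=\lfloor n/\log n\rfloor$ and $\omega^{\leq y}(f):=\#\{l\mid f:\deg(l)\leq y\}$. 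The tail is harmless: by the function field Mertens estimate $\sum_{\deg(l)\leq y}|l|^{-1}=\log y+O(1)$, the mean of $\omega(f)-\omega^{\leq y}(f)$ is $\log(n/y)+O(1)=\log\log n+O(1)=o(\sqrt{\log n})$, so Markov's inequality again yields $(\omega-\omega^{\leq y})/\sqrt{\log n}\to 0$ in probability. Since moreover $\log y=\log n-o(\sqrt{\log n})$, another application of Slutsky reduces the problem to showing that $(\omega^{\leq y}(f)-\log y)/\sqrt{\log y}$ converges to the standard normal.

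Here the exact independence pays off. For a fixed moment order $r$ and all large $n$ one has $ry\leq rn/\log n\leq n$, so every monomial arising in the expansion of $\bigl(\sum_{\deg(l)\leq y}(\mathbf{1}[l\mid f]-|l|^{-1})\bigr)^r$ involves a product of indicators over at most $r$ distinct irreducibles of total degree at most $ry\leq n$, and its expectation therefore factors exactly. Consequently the $r$-th central moment of $\omega^{\leq y}$ over $M_n$ coincides, with no error term, with the $r$-th central moment of $\sum_{\deg(l)\leq y}B_l$, where the $B_l$ are independent Bernoulli variables with $\Pr[B_l=1]=|l|^{-1}$. This independent sum has variance $\sum_{\deg(l)\leq y}|l|^{-1}(1-|l|^{-1})=\log y+O(1)$ and uniformly bounded summands, so it satisfies the Lyapunov condition and its normalization converges to the standard Gaussian with all moments converging. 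By the Fr\'echet--Shohat moment convergence theorem this transfers to $(\omega^{\leq y}-\log y)/\sqrt{\log y}$, completing the chain back to $\omega_1$.

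The main obstacle I anticipate is not any single estimate but the careful bookkeeping in the truncation: one must verify that the choice $y=\lfloor n/\log n\rfloor$ simultaneously (i) keeps the truncated mean within $o(\sqrt{\log n})$ of $\log n$, and (ii) guarantees $ry\leq n$ for each fixed $r$ as $n\to\infty$, so that exact independence holds for every individual moment. The pleasant feature, in contrast to the integer case, is that once these degree inequalities are arranged the independence is exact, so no error terms from approximate independence of divisibility events ever arise; the only genuinely analytic inputs are the function field Mertens estimate for $\sum_{\deg(l)\leq y}|l|^{-1}$ and the elementary Lyapunov central limit theorem for bounded independent variables.
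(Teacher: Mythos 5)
Your argument is correct, but it reaches the Gaussian law for $\omega$ by a genuinely different route than the paper. The reduction from $\omega_1$ to $\omega$ is essentially identical in both: you bound the mean of $\omega(f)-\omega_1(f)$ by the constant $L(2)$ and apply Markov's inequality, which is exactly the content of the paper's estimate $\sum_{f\in M_n}(a_1(f)+a_2(f))=L(2)q^n+O(nq^{n/2})$ and its two-sided $\liminf$/$\limsup$ squeeze (your Slutsky step packages the same inequality $D(\omega,n,a)\leq D(\omega_1,n,a)\leq D(\omega,n,a+\epsilon)+o(1)$ in probabilistic language). Where you diverge is the input for $\omega$ itself: the paper does not prove a central limit theorem for $\omega$ at all --- it quotes Zhang's cumulative version \eqref{kacerdosver1} and extracts the fixed-degree statement (Theorem \ref{ver2}) by a short telescoping/limit argument --- whereas you prove the fixed-degree CLT from scratch by the method of moments, exploiting the exact identity $\frac{1}{q^n}\#\{f\in M_n: w\mid f\}=|w|^{-1}$ for $\deg(w)\leq n$ to get \emph{exact} independence of the divisibility events below the truncation $y=\lfloor n/\log n\rfloor$, then invoking Lyapunov and Fr\'echet--Shohat. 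Your approach is longer but self-contained and removes the dependence on \cite{wz}; the paper's is shorter at the cost of an external citation. Two bookkeeping points you should make explicit when writing this up: (i) the exact factorization of a mixed moment requires \emph{joint} independence of the distinct irreducibles involved, i.e.\ the product formula for every subset, which does follow since every subset of a collection of total degree $\leq ry\leq n$ also has total degree $\leq n$; and (ii) your moments are naturally centered at $\sum_{\deg(l)\leq y}|l|^{-1}=\log y+A_1+o(1)$ and normalized by $\sigma_y=(\log y+O(1))^{1/2}$ rather than by $\log y$ and $\sqrt{\log y}$, so one final recentering step (harmless, since the discrepancies are $O(1)$ and $O(\sigma_y^{-1})$ respectively) is needed before chaining back through $\log y=\log n+o(\sqrt{\log n})$ to the statement of the theorem.
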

For a monic polynomial $f\in M$, let $\Omega(f)$ be the number of monic irreducible factors of $f$. Then $\Omega(f)$ satisfies the Erd\H{o}s-Kac Theorem (see \cite[Remark, Page 605]{liu}). Let $\Omega_k(f)$ be the number of monic irreducible factors of $f$ with a given multiplicity $k\geqslant 1$. Note that $\Omega_k(f) = k \cdot \omega_k(f)$ and 
$\Omega(f)=\sum_{k\geqslant 1} \Omega_k(f)$ for all $f\in M$. We can deduce similar results for $\Omega_k(f)$ as our results above. In particular, we can prove that $\Omega_1(f)$ has  normal order $\log (\text{deg}(f))$ and also satisfies the Erd\H{o}s-Kac Theorem. We can also prove that the functions $\Omega_k(f)$ with $k \geq 2$ do not have normal order $G(f)$ for any non-decreasing function $G: M \rightarrow \mathbb{R}_{\geq 0}$.   

\section{Lemmata}
In this section, we prove some lemmata that are needed in the proof of \thmref{mainresult_first_moment} and \thmref{mainresult_second_moment}. We start by recalling the analogue of the prime number theorem for polynomials. We recall that $\pi_q(n)$ denotes the number of monic irreducible polynomials $l\in P$ with degree $\text{deg}(l)=n$. 

\begin{lma}\label{PNT}\cite[Theorem 2.2]{mr}(The prime number theorem for polynomials)
	For $n \geq 1$, we have
	$$\pi_q(n) = \frac{q^n}{n} + O \left( \frac{q^{\frac{n}{2}}}{n} \right).$$
\end{lma}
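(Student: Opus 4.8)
The plan is to reduce everything to the exact finite counting identity
$$q^n = \sum_{d \mid n} d\,\pi_q(d),$$
and then to recover $\pi_q(n)$ by Möbius inversion, controlling the error through the elementary fact that every proper divisor of $n$ is at most $n/2$. To establish the displayed identity I would use the factorization of $t^{q^n}-t$ over $\mathbb{F}_q$. The roots of $t^{q^n}-t$ in a fixed algebraic closure are precisely the elements of $\mathbb{F}_{q^n}$, and a monic irreducible $l$ of degree $d$ divides $t^{q^n}-t$ if and only if $d \mid n$, since the roots of $l$ generate $\mathbb{F}_{q^d}$ and $\mathbb{F}_{q^d}\subseteq\mathbb{F}_{q^n}$ exactly when $d\mid n$. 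Because $t^{q^n}-t$ is separable (its derivative is $-1$), each such irreducible occurs exactly once, so
$$t^{q^n} - t = \prod_{d \mid n}\ \prod_{\substack{l \in P \\ \deg l = d}} l,$$
and comparing degrees on both sides yields the identity. (Equivalently, the same relation can be read off by comparing coefficients in the Euler-product expansion $\tfrac{1}{1-qu} = \prod_{r \geq 1}(1-u^r)^{-\pi_q(r)}$ of the zeta function of $\mathbb{F}_q[t]$, with $u = q^{-s}$.)

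Next I would invert. Applying Möbius inversion to $q^n = \sum_{d \mid n} d\,\pi_q(d)$ gives
$$n\,\pi_q(n) = \sum_{d \mid n} \mu\!\left(\frac{n}{d}\right) q^d .$$
Isolating the term $d=n$, for which $\mu(1)=1$ contributes $q^n$, I would write $n\,\pi_q(n) = q^n + \sum_{d \mid n,\, d < n} \mu(n/d)\, q^d$, leaving only a sum over proper divisors to be estimated.

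Finally I would bound the error. Every proper divisor $d$ of $n$ satisfies $d \leq n/2$, and $|\mu| \leq 1$, so
$$\left| \sum_{\substack{d \mid n \\ d < n}} \mu(n/d)\, q^d \right| \leq \sum_{1 \leq d \leq n/2} q^d \leq \frac{q^{\lfloor n/2 \rfloor + 1}}{q-1} = O\!\left(q^{n/2}\right),$$
and dividing by $n$ then gives $\pi_q(n) = q^n/n + O(q^{n/2}/n)$, as claimed. The argument is entirely routine once the counting identity is available; the only point demanding genuine care is the first step, namely justifying that $t^{q^n}-t$ is exactly the product of the distinct monic irreducibles of degree dividing $n$ (separability together with the degree-divisibility criterion for subfield inclusion). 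The subsequent inversion and the geometric-series estimate on the proper-divisor contribution are mechanical.
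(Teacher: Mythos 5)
Your proof is correct and complete; the paper does not prove this lemma but simply cites Rosen \cite[Theorem 2.2]{mr}, and the argument given there is exactly yours: the identity $q^n=\sum_{d\mid n}d\,\pi_q(d)$ from the factorization of $t^{q^n}-t$ (equivalently from the zeta function's Euler product), Möbius inversion, and the geometric-series bound over proper divisors $d\leq n/2$. Nothing further is needed.
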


%
\begin{lma}\label{kkthm1}
	For $R>0$, let $F(z)$ be a meromorphic function on $|z| \le R$ with at most finitely many poles in the open disk $|z| < R$ and analytic on the circle $|z| =R$. Suppose that $F(z)$ is analytic at $z=0$ with the power series representation
	\begin{align*}
		F(z)=\sum_{n=0}^{\infty}c_nz^n
	\end{align*}
in the open disk $\left|z\right|<r$ for some $r>0$. As $N\rightarrow \infty$, we have 
\begin{align*}
			c_N=-\sum{\vphantom{\sum}}' \textnormal{Res} \left(\frac{F(z)}{z^{N+1}}\right)+O\left(R^{-N}\max_{\left|z\right|=R}\left|F(z)\right|\right)
	\end{align*}
where the implied constant is absolute and the sum calculates the residues of $F(z)/z^{N+1}$ at the poles of $F(z)/z^{N+1}$ in $\left|z\right|<R$, except the one that comes from a possible pole at $z=0$. 
\end{lma}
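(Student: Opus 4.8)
The plan is to recognize this as a standard coefficient-extraction statement and to prove it by a single application of the residue theorem together with the elementary estimation lemma for a contour integral. The starting point is the observation that the $N$-th Taylor coefficient $c_N$ is precisely the residue of $F(z)/z^{N+1}$ at the origin: since $F(z)=\sum_{n\ge 0}c_nz^n$ in a neighbourhood of $0$, the Laurent expansion of $F(z)/z^{N+1}$ at $z=0$ has $c_N$ as its coefficient of $z^{-1}$, so $\textnormal{Res}_{z=0}\left(F(z)/z^{N+1}\right)=c_N$. This is the identity that lets us separate the origin from the remaining poles.

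First I would record that the hypotheses make $G(z):=F(z)/z^{N+1}$ meromorphic on the closed disk $|z|\le R$, with only finitely many poles in $|z|<R$ (those of $F$, together with the pole of order $N+1$ at the origin), and analytic on the boundary circle $|z|=R$ because $F$ is. Hence the residue theorem applies to $G$ on $|z|=R$ and gives
\[
\frac{1}{2\pi i}\oint_{|z|=R}\frac{F(z)}{z^{N+1}}\,dz=\sum\textnormal{Res}\left(\frac{F(z)}{z^{N+1}}\right),
\]
where the sum ranges over all poles in $|z|<R$. Splitting off the contribution of the origin by means of the identity above, this becomes
\[
\frac{1}{2\pi i}\oint_{|z|=R}\frac{F(z)}{z^{N+1}}\,dz=c_N+\sum{\vphantom{\sum}}'\,\textnormal{Res}\left(\frac{F(z)}{z^{N+1}}\right),
\]
where $\sum{\vphantom{\sum}}'$ denotes the sum over the poles in $|z|<R$ other than the one at the origin. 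Rearranging isolates $c_N$ up to the boundary integral.

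It then remains to bound the boundary integral, which is routine: on $|z|=R$ one has $\left|z^{N+1}\right|=R^{N+1}$ and the contour has length $2\pi R$, so the standard estimation lemma yields
\[
\left|\frac{1}{2\pi i}\oint_{|z|=R}\frac{F(z)}{z^{N+1}}\,dz\right|\le\frac{1}{2\pi}\cdot 2\pi R\cdot\frac{1}{R^{N+1}}\max_{|z|=R}\left|F(z)\right|=R^{-N}\max_{|z|=R}\left|F(z)\right|.
\]
Combining the last two displays gives the claimed formula with an absolute implied constant (in fact at most $1$). I do not expect a serious obstacle here; the only points requiring care are the clean identification of the residue at the origin with $c_N$ and the verification that the residue theorem may be applied on $|z|=R$, for which the analyticity of $F$ on the boundary circle—guaranteeing that no pole lies on the contour—is exactly the hypothesis provided.
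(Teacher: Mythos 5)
Your proof is correct and is exactly the argument the paper gives (the paper's own proof is a one-line appeal to the residue theorem applied to $\frac{1}{2\pi i}\int_{|z|=R}F(z)z^{-N-1}\,dz$ together with the power series of $F$ at $0$); you have simply written out the details, including the identification $\textnormal{Res}_{z=0}\left(F(z)/z^{N+1}\right)=c_N$ and the ML bound showing the implied constant is at most $1$.
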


\begin{proof}
	Applying the residue theorem to the integral $	\frac{1}{2\pi i}\int_{\left|z\right|=R}\frac{F(z)}{z^{N+1}}\, dz$ and using the power series of $F(z)$ around $z=0$, we obtain the desired result.
\end{proof}

\begin{lma}\label{Lemma_Mertens}(Mertens' theorem for $\mathbb{F}_q[x]$) Let $A_1$ be defined as in \eqref{defA1}. As $n \rightarrow \infty$, we have
	$$\sum_{\substack{l \in P \\ \textnormal{deg}(l) \leq n}} \frac{1}{|l|} = \log n + A_1 + O \left( \frac{1}{n} \right).$$
\end{lma}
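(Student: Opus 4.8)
The plan is to collapse the sum over irreducibles into a sum over degrees and then feed in the prime number theorem for polynomials (\lmaref{PNT}). Since $|l| = q^{\deg(l)}$, grouping the irreducibles $l \in P$ by their degree $r$ gives
\begin{equation*}
\sum_{\substack{l \in P \\ \textnormal{deg}(l) \leq n}} \frac{1}{|l|} = \sum_{r=1}^{n} \frac{\pi_q(r)}{q^r}.
\end{equation*}
The idea is now to peel off the main term $1/r$ from each summand, writing
\begin{equation*}
\frac{\pi_q(r)}{q^r} = \frac{1}{r} + \left( \pi_q(r) - \frac{q^r}{r} \right) q^{-r},
\end{equation*}
so that the full sum splits as a harmonic piece plus the partial sums of exactly the series defining $c_1$ in \eqref{defA1}.

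First I would handle the harmonic piece: the standard asymptotic for the harmonic numbers gives $\sum_{r=1}^{n} 1/r = \log n + \gamma + O(1/n)$, which already supplies the $\log n$ and the $\gamma$ appearing in $A_1 = \gamma + c_1$. Next I would treat the correction sum $\sum_{r=1}^{n} (\pi_q(r) - q^r/r) q^{-r}$. The key input here is \lmaref{PNT}, which yields $\pi_q(r) - q^r/r = O(q^{r/2}/r)$, and hence
\begin{equation*}
\left( \pi_q(r) - \frac{q^r}{r} \right) q^{-r} = O\!\left( \frac{q^{-r/2}}{r} \right).
\end{equation*}
This bound shows that the series $c_1 = \sum_{r=1}^{\infty} (\pi_q(r) - q^r/r) q^{-r}$ converges absolutely, so I may complete the partial sum to $c_1$ at the cost of the tail $\sum_{r=n+1}^{\infty} O(q^{-r/2}/r)$.

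The remaining step is to bound this tail, which is the only place any estimate is needed. Because the terms decay geometrically, summing the geometric series gives a tail of size $O\!\left( q^{-n/2}/n \right)$, which is absorbed into the claimed error term $O(1/n)$ (indeed it is far smaller). Combining the three contributions then produces
\begin{equation*}
\sum_{\substack{l \in P \\ \textnormal{deg}(l) \leq n}} \frac{1}{|l|} = \big( \log n + \gamma + O(1/n) \big) + \big( c_1 + O(q^{-n/2}/n) \big) = \log n + A_1 + O\!\left( \frac{1}{n} \right),
\end{equation*}
as desired. I do not expect any genuine obstacle in this argument; the whole proof is driven by \lmaref{PNT}, and the only mild care required is in justifying the absolute convergence of $c_1$ and controlling its tail via the geometric decay coming from the prime number theorem's error term.
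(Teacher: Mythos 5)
Your proposal is correct and follows essentially the same route as the paper: decompose $\pi_q(r)/q^r$ into the harmonic term $1/r$ plus the remainder $(\pi_q(r)-q^r/r)q^{-r}$, invoke the harmonic-number asymptotic for the former, and use \lmaref{PNT} to complete the latter to $c_1$ with a tail of size $O(q^{-n/2}/n)$. No substantive differences.
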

\begin{proof}
	By Lemma \ref{PNT}, we have
	$$\pi_q(m) = \frac{q^m}{m} + r_m$$
	where $r_m \ll \frac{q^{m/2}}{m}$ as $m \rightarrow\infty$. Thus
	$$\sum_{\substack{l \in P \\ \textnormal{deg}(l) \leq n}} \frac{1}{|l|} = \sum_{m=1}^n q^{-m} \pi_q(m) = \sum_{m=1}^n \frac{1}{m} + \sum_{m=1}^n r_m q^{-m}.$$
	By \cite[Theorem 1]{bw}, we have 
	\begin{align}\label{sum1/k}
		\sum_{m=1}^n \frac{1}{m} = \log n + \gamma + O \left( \frac{1}{n} \right).
	\end{align}
	Also, we have 
	\begin{align*}
		\sum_{m=1}^n r_m q^{-m}= c_1 - \sum_{m=n+1}^\infty r_m q^{-m}
		&=c_1+O \left( \sum_{m = n+1}^\infty \frac{q^{-m/2}}{m} \right) =c_1+O\left(\frac{q^{-n/2}}{n}\right).
	\end{align*}
	Hence the desired result follows.
\end{proof}

\begin{lma}\label{lma1}
	Let $g : P \rightarrow \mathbb{C}$ be a function such that $|g(l)| \leq 1$.
	Let $k \in \mathbb{N}$. Let $s = \sigma + it, \ \sigma, t \in \mathbb{R}$. Define 
	\begin{equation*}
		B_{g,k}(s) := \sum_{l \in P} \frac{1}{|l|^{ks} (|l|^s - g(l))}.
	\end{equation*}
	Then $B_{g,k}(s)$ is absolutely convergent for $\sigma > \frac{1}{k+1}$. Moreover, we have
	\begin{equation*}
		B_{g,k}(s) = \sum_{f \in M} \frac{b_{g,k}(f)}{|f|^s},
	\end{equation*}
	where
	\begin{equation*}
		b_{g,k}(f) = \begin{cases}
			g(l)^{\alpha - (k+1)} & \text{if } f  = l^\alpha \textit{ for some } l\in P \textit{ and }  \alpha \geq k+1, \\
			0 & \text{otherwise}.
		\end{cases}
	\end{equation*}
\end{lma}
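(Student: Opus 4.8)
The plan is to handle $B_{g,k}(s)$ one summand at a time: first establish the absolute convergence and its abscissa, and then expand each summand as a geometric series to read off the Dirichlet coefficients. For the convergence claim, I would begin with the elementary bound on a single term. Since $\bigl|\,|l|^s\,\bigr| = |l|^\sigma$ and $|g(l)| \leq 1$, the triangle inequality gives $\bigl|\,|l|^s - g(l)\,\bigr| \geq |l|^\sigma - 1$, which is strictly positive when $\sigma > 0$ because $|l| \geq q \geq 2$ for every $l \in P$. Consequently,
$$\left| \frac{1}{|l|^{ks}(|l|^s - g(l))} \right| \leq \frac{1}{|l|^{k\sigma}(|l|^\sigma - 1)} = \frac{1}{|l|^{(k+1)\sigma}} \cdot \frac{1}{1 - |l|^{-\sigma}} \ll_{q,\sigma} \frac{1}{|l|^{(k+1)\sigma}},$$
the factor $(1 - |l|^{-\sigma})^{-1}$ being uniformly bounded once $\sigma$ is bounded away from $0$. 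Grouping the sum over $l$ by degree and invoking the prime number theorem for polynomials (\lmaref{PNT}) to write $\pi_q(r) \asymp q^r/r$ reduces the problem to the convergence of $\sum_{r \geq 1} \frac{1}{r}\, q^{r(1 - (k+1)\sigma)}$, which is a subgeometric series converging precisely when $(k+1)\sigma > 1$, i.e. $\sigma > \frac{1}{k+1}$.

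For the Dirichlet series identity, note that on this half-plane $|g(l)|/|l|^\sigma \leq q^{-\sigma} < 1$, so the geometric series
$$\frac{1}{|l|^s - g(l)} = \frac{1}{|l|^s} \sum_{j=0}^\infty \left( \frac{g(l)}{|l|^s} \right)^j = \sum_{j=0}^\infty \frac{g(l)^j}{|l|^{(j+1)s}}$$
converges. Multiplying through by $|l|^{-ks}$ and substituting $\alpha = k + j + 1$ (so that $j \geq 0$ corresponds to $\alpha \geq k+1$) yields the prime-power expansion
$$\frac{1}{|l|^{ks}(|l|^s - g(l))} = \sum_{\alpha \geq k+1} \frac{g(l)^{\alpha - (k+1)}}{|l^\alpha|^s}.$$

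Summing over $l \in P$ then gives a double sum over pairs $(l,\alpha)$ with $l \in P$ and $\alpha \geq k+1$. Because every monic prime power has a unique representation $l^\alpha$ with $l \in P$ and $\alpha \geq 1$ by unique factorization in $\mathbb{F}_q[t]$, this double sum is exactly a sum over those $f \in M$ that are prime powers with exponent at least $k+1$, all other $f$ contributing $0$; matching terms reproduces the definition of $b_{g,k}$ and gives $B_{g,k}(s) = \sum_{f \in M} b_{g,k}(f)/|f|^s$. The only genuine subtlety is justifying the interchange turning the double series over $(l,\alpha)$ into a single Dirichlet series indexed by $f \in M$; this is legitimate precisely because the first part shows the whole family is absolutely summable for $\sigma > \frac{1}{k+1}$, so Fubini applies and no conditional-convergence issues arise. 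I expect the main point requiring care to be pinning down the exact abscissa $\frac{1}{k+1}$ rather than a cruder bound, since this forces one to track both the PNT input and the extra factor of $|l|^{s}$ contributed by the leading term of the geometric expansion.
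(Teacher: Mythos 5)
Your proposal is correct and follows essentially the same route as the paper: bound each term by $|l|^{-(k+1)\sigma}$ (up to a bounded factor), sum over degrees to get convergence for $\sigma>\frac{1}{k+1}$, then expand $(|l|^s-g(l))^{-1}$ as a geometric series and reindex prime powers $l^\alpha$ with $\alpha\geq k+1$ to read off $b_{g,k}$. The only cosmetic difference is that you invoke the prime number theorem for polynomials where the paper just uses the trivial bound $\pi_q(r)\leq q^r$; both suffice.
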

\begin{proof}
	For $\sigma>\frac{1}{k+1}$, we have 
	\begin{align*}
		\sum_{l \in P} \frac{1}{|l|^{ks} (|l|^s - g(l))}\ll \sum_{l\in P}\frac{1}{\left|l\right|^{(k+1)\sigma}}\ll \sum_{n\geqslant 1}\frac{q^{n}}{q^{n(k+1)\sigma}}\ll 1,
	\end{align*}
	and thus the series $B_{g,k}(s)$ is absolutely convergent for $\sigma>\frac{1}{k+1}$. Since $\left|g(l)\right|\leqslant 1$ for all $l\in P$ and $\left|\frac{g(l)}{\left|l\right|^s}\right|<1$ for $\sigma>\frac{1}{k+1}$, we have 
	\begin{equation*}
		B_{g,k}(s) = \sum_{l \in P} \frac{1}{|l|^{(k+1)s}} \left( 1 + \frac{g(l)}{|l|^s} + \left( \frac{g(l)}{|l|^s} \right)^2 + \cdots \right) = \sum_{f \in M} \frac{b_{g,k}(f)}{|f|^s}
	\end{equation*}
		which completes the proof.
\end{proof}

\begin{prop}\label{prop1}
Let $g : P \rightarrow \mathbb{C}$ be a function such that $|g(l)| \leq 1$.
Let $k \in \mathbb{N}$. Define
	$$a_{g,k}(f) := \sum_{\overset{l \in P, \ l|f}{\nu_l(f) \geq k+1}}\big (1 + g(l) + (g(l))^2 + \cdots + (g(l))^{\nu_l(f) - (k+1)} \big),$$
	with the convention that the empty sum is taken to be zero.
	Define 
	$$C_{g,k} := \sum_{l \in P} \frac{1}{|l|^k (|l| - g(l))}.$$
	Then, we have
	$$\sum_{f \in M_n} a_{g,k}(f) = C_{g,k} \ q^n + O \left( n \cdot q^{\frac{n}{k+1}} \right),$$
	where the implied constant is absolute. 
\end{prop}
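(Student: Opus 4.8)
The plan is to recognize $a_{g,k}$ as a Dirichlet convolution of the constant function $\mathbf 1$ with the arithmetic function $b_{g,k}$ from \lmaref{lma1}, to pass to generating functions in the variable $z = q^{-s}$, and then to extract the coefficient of $z^n$ using the residue estimate of \lmaref{kkthm1}.

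First I would verify the convolution identity $a_{g,k}(f) = \sum_{d\mid f} b_{g,k}(d)$. Writing out the inner geometric sum in the definition of $a_{g,k}(f)$ and comparing with the values of $b_{g,k}$ on prime powers, one sees that for every $l\mid f$ with $\nu_l(f)\ge k+1$ the contribution $1 + g(l) + \cdots + g(l)^{\nu_l(f)-(k+1)}$ equals $\sum_{\beta=k+1}^{\nu_l(f)} b_{g,k}(l^\beta)$. Since $b_{g,k}$ is supported on prime powers $l^\beta$ with $\beta\ge k+1$, summing over all monic divisors $d\mid f$ reproduces exactly $a_{g,k}(f)$. Consequently the generating function $\mathcal A(z) := \sum_f a_{g,k}(f)\,z^{\deg f}$ factors as $\mathcal A(z) = Z(z)\,\mathcal B(z)$, where $Z(z) = \sum_f z^{\deg f} = (1-qz)^{-1}$ is the zeta function of $\mathbb F_q[t]$ and $\mathcal B(z) = \sum_f b_{g,k}(f)\,z^{\deg f} = \sum_{l\in P} z^{(k+1)\deg l}/(1 - g(l)z^{\deg l})$ is the series $B_{g,k}(s)$ of \lmaref{lma1} written with $z = q^{-s}$. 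Note that $\sum_{f\in M_n} a_{g,k}(f)$ is precisely the coefficient of $z^n$ in $\mathcal A(z)$.

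Next I would pin down the analytic data. The computation in \lmaref{lma1} shows $\mathcal B(z)$ is absolutely convergent, hence analytic, on $|z| < q^{-1/(k+1)}$, and since each summand is regular there (as $|g(l)z^{\deg l}| \le |z|^{\deg l} < 1$), $\mathcal B$ has no poles in that disk. Therefore $\mathcal A(z) = \mathcal B(z)/(1-qz)$ is meromorphic on $|z| < q^{-1/(k+1)}$ with a single simple pole at $z = 1/q$, which lies strictly inside the disk because $k\ge 1$ forces $q^{-1} < q^{-1/(k+1)}$. I would then apply \lmaref{kkthm1} to $F = \mathcal A$ with radius $R$ satisfying $q^{-1} < R < q^{-1/(k+1)}$. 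The only residue to collect is at $z = 1/q$; using $1 - qz = -q(z-1/q)$ one finds $-\textnormal{Res}_{z=1/q}\big(\mathcal A(z)\,z^{-(n+1)}\big) = q^n\,\mathcal B(1/q)$, and substituting $z^{\deg l} = |l|^{-1}$ turns $\mathcal B(1/q)$ into $\sum_{l\in P}|l|^{-k}/(|l|-g(l)) = C_{g,k}$. This produces the main term $C_{g,k}\,q^n$.

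The main work is the error term, where I expect the principal obstacle to lie: $\mathcal B(z)$ grows as $|z|$ approaches the boundary $q^{-1/(k+1)}$, so $R$ must be chosen to balance the factor $R^{-n}$ against $\max_{|z|=R}|\mathcal A(z)|$. Taking $R = q^{-1/(k+1)}(1 - 1/n)$, I would bound, for $|z| = R$,
$$|\mathcal B(z)| \le \frac{1}{1-R}\sum_{d \ge 1}\pi_q(d)\,R^{(k+1)d} \le \frac{1}{1-R}\cdot\frac{qR^{k+1}}{1-qR^{k+1}},$$
using the crude bound $\pi_q(d) \le q^d$ coming from \lmaref{PNT}. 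Since $qR^{k+1} = (1-1/n)^{k+1} = 1 - O(1/n)$ and $R$ stays bounded away from $1$, this gives $\max_{|z|=R}|\mathcal B(z)| = O(n)$; moreover $|1 - qz| \ge qR - 1$ is bounded below by a positive constant because $qR \to q^{k/(k+1)} > 1$. Hence $\max_{|z|=R}|\mathcal A(z)| = O(n)$, while $R^{-n} = q^{n/(k+1)}(1-1/n)^{-n} = O\big(q^{n/(k+1)}\big)$. Feeding these into the error term of \lmaref{kkthm1} yields $O\big(n\,q^{n/(k+1)}\big)$, which completes the estimate.
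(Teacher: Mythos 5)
Your proposal is correct and follows essentially the same route as the paper: the convolution identity $a_{g,k}=\mathbf 1*b_{g,k}$ from \lmaref{lma1}, the factorization $\mathcal A(z)=\mathcal B(z)/(1-qz)$, extraction of the $z^n$ coefficient via \lmaref{kkthm1}, the residue at $z=1/q$ giving $C_{g,k}q^n$, and a radius shrunk by a factor $1-\Theta(1/n)$ inside $q^{-1/(k+1)}$ to balance $R^{-n}$ against the blow-up of $\mathcal B$ near the boundary. The only difference is the cosmetic parametrization of $R$ (the paper writes $R=q^{-(1/(k+1)+\epsilon)}$ with $q^{(k+1)\epsilon}=(n+1)/n$), which yields the same $O(n\,q^{n/(k+1)})$ error.
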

\begin{proof}
	Let $s = \sigma + it, \ \sigma, t \in \mathbb{R}$. We define
	$$A_{g,k}(s) := \sum_{f \in M} \frac{a_{g,k}(f)}{|f|^s} = \sum_{n \geq 1} \left( \sum_{f \in M_n} a_{g,k}(f) \right) \  q^{-n s}.$$
	Note that, we will show later in this proof that $A_{g,k}(s)$ is absolutely convergent for $\sigma > 1$. 
	
	 The zeta function $\zeta_q(s)$ in $\mathbb{F}_q[t]$ is defined by
	 \begin{align*}
	 	\zeta_q(s) = \sum_{f \in M} \frac{1}{|f|^s} = \frac{1}{1 - q^{1-s}}\quad (\sigma > 1).
	 \end{align*}
Thus, we have
	$$ \zeta_q(s) B_{g,k}(s) = \sum_{f \in M} \frac{1}{|f|^s} \cdot \sum_{f \in M} \frac{b_{g,k}(f)}{|f|^s} = \sum_{f \in M} \frac{\sum_{d | f} b_{g,k}(d)}{|f|^s}.$$
	Notice that
	$$\sum_{d|f} b_{g,k}(d) = \sum_{\substack{l \in P, \ l|f \\ \nu_l(f) \geq k+1}} \left( \sum_{j = k+1}^{\nu_l(f)} (g(l))^{j - (k+1)} \right) = a_{g,k}(f).$$
	Since $\zeta_q(s)$ and $B_{g,k}(s)$ converge absolutely for $\sigma > 1$ and $\sigma > \frac{1}{k+1}$ respectively, we conclude that  
	$$\sum_{f \in M} \frac{a_{g,k}(f)}{|f|^s} = A_{g,k}(s) = \zeta_q(s) B_{g,k}(s)$$
	converges absolutely in the region $\sigma > 1$. Additionally, $A_{g,k}(s)$ admits a meromorphic continuation up to $\sigma > \frac{1}{k+1}$ with simple poles at $1 + i \frac{2 \pi k}{\log q}$ where $k \in \mathbb{Z}$. Making the change of variable $u = q^{-s}$, we introduce $\mathcal{B}_{g,k}(u) \ (= B_{g,k}(s))$ and $\mathcal{A}_{g,k}(u) \ (= A_{g,k}(s))$ as  
	\begin{equation*}
		\mathcal{B}_{g,k}(u) = \sum_{l \in P} \frac{u^{(k+1)\text{deg}(l)}}{1 - g(l) u^{\text{deg}(l)}} \quad \text{and} \quad \mathcal{A}_{g,k}(u) = \sum_{f \in M} a_{g,k}(f) u^{\text{deg}(f)}.
	\end{equation*}
	Notice that $\mathcal{A}_{g,k}(u)$ admits a meromorphic continuation to $|u| < q^{-\frac{1}{k+1}}$ with only a simple pole at $q^{-1}$. 
	Moreover as $\zeta_q(s) = \mathcal{Z}_q(u) = \frac{1}{1 - qu}$, we have
	$$\mathcal{A}_{g,k}(u) = \mathcal{Z}_q(u) \mathcal{B}_{g,k}(u) = \frac{\mathcal{B}_{g,k}(u)}{1-qu}.$$
	Since $\mathcal{A}_{g,k}(u)$ is analytic at $u =0$, we use the power series of $\mathcal{A}_{g,k}(u)$ around $u=0$ (in a small disk $|u|<r < q^{-1}$) in \lmaref{kkthm1} to complete the proof. In order to do that, we write $\mathcal{A}_{g,k}(u)$ as 
	$$\mathcal{A}_{g,k}(u) = \sum_{n=0}^\infty  \left( \sum_{f \in M_n} a_{g,k}(f) \right) u^n$$
	and for any $\epsilon \in (0,1/4]$, we choose $R =  q^{- \left(\frac{1}{k+1} + \epsilon \right)}  > q^{-1}$ to satisfy the hypothesis of the lemma for the function $\mathcal{A}_{g,k}(u) = \frac{\mathcal{B}_{g,k}(u)}{1-qu}$. Thus applying the lemma, we obtain
	\begin{equation}\label{impo0}
		\sum_{f \in M_n} a_{g,k}(f) = - \text{Res}_{u = q^{-1}} \frac{\mathcal{B}_{g,k}(u)}{u^{n+1} (1 - qu)} + O \left( R^{-n} \max_{\left|u\right|=R}\left| \frac{\mathcal{B}_{g,k}(u)}{1-qu}\right| \right),
	\end{equation}
	where the implied constant is absolute. Note that $\mathcal{B}_{g,k}(q^{-1}) = C_{g,k}$ and thus 
	\begin{equation}\label{residue0}
		- \text{Res}_{u = q^{-1}} \frac{\mathcal{B}_{g,k}(u)}{u^{n+1} (1 - qu)} = C_{g,k} \ q^n.
	\end{equation}
	Moreover, on $|u| = R = q^{- \left(\frac{1}{k+1} + \epsilon \right)}$, we have $\frac{1}{1-qu}\ll  1$ and
	$\mathcal{B}_{g,k}(u) \ll \frac{1}{q^{(k+1)\epsilon}-1}$. Thus, the error term in \eqref{impo0} is bounded by
\begin{align*}
	\ll q^{\frac{n}{k+1}}\frac{q^{n\epsilon}}{q^{(k+1)\epsilon}-1}.
\end{align*}
We choose $\epsilon$ such that $q^{(k+1)\epsilon}=\frac{n+1}{n}$. Then $\epsilon=\frac{1}{(k+1)\log q}\log \left(\frac{n+1}{n}\right)\leqslant \frac{1}{4}$ for $k\geqslant 1,\, q\geqslant 2, n\geqslant 3$, and we have 
\begin{align*}
	\frac{q^{n\epsilon}}{q^{(k+1)\epsilon}-1}&=q^{n \frac{1}{(k+1)\log q}\log \left(\frac{n+1}{n}\right)}\frac{1}{\frac{n+1}{n}-1}
	\\&=n\exp\left(\frac{1}{k+1}n\log\left(1+\frac{1}{n}\right)\right)
	\\&\ll n,
\end{align*}
where the implied constant is absolute. Thus the error term in \eqref{impo0} is bounded by $ nq^{\frac{n}{k+1}}$, and this together with \eqref{residue0} completes the proof.	
\end{proof}
\begin{lma}\label{lemmakn-k}
	As $n \rightarrow \infty$, we have
	$$\sum_{m=1}^{n-1} \frac{1}{m (n-m)} = \frac{2 \log n}{n} + \frac{2 \gamma}{n} + O \left( \frac{1}{n^2} \right).$$
\end{lma}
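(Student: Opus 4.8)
The plan is to collapse the double denominator by partial fractions and then reduce everything to a single harmonic sum. For $1 \le m \le n-1$ one has the identity $\frac{1}{m(n-m)} = \frac{1}{n}\left(\frac{1}{m} + \frac{1}{n-m}\right)$, so summing over $m$ gives
$$\sum_{m=1}^{n-1}\frac{1}{m(n-m)} = \frac{1}{n}\sum_{m=1}^{n-1}\frac{1}{m} + \frac{1}{n}\sum_{m=1}^{n-1}\frac{1}{n-m}.$$
Re-indexing the second sum by $j = n-m$ shows it equals $\sum_{j=1}^{n-1}\frac{1}{j}$, which is identical to the first. Hence the whole expression equals $\frac{2}{n}\sum_{m=1}^{n-1}\frac{1}{m}$, and the problem is reduced to estimating the harmonic number $\sum_{m=1}^{n-1}\frac{1}{m}$.

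For this I would apply \eqref{sum1/k} with $n-1$ in place of $n$, which gives $\sum_{m=1}^{n-1}\frac{1}{m} = \log(n-1) + \gamma + O(1/n)$. Multiplying by $2/n$ then yields
$$\sum_{m=1}^{n-1}\frac{1}{m(n-m)} = \frac{2\log(n-1)}{n} + \frac{2\gamma}{n} + O\left(\frac{1}{n^2}\right),$$
where the $O(1/n)$ error has been absorbed into $O(1/n^2)$ after the factor $2/n$. It then remains only to replace $\log(n-1)$ by $\log n$: writing $\log(n-1) = \log n + \log\left(1 - \tfrac{1}{n}\right) = \log n + O(1/n)$, the resulting discrepancy contributes $\frac{2}{n}\cdot O(1/n) = O(1/n^2)$, again within the stated error. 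This produces exactly the claimed asymptotic $\frac{2\log n}{n} + \frac{2\gamma}{n} + O(1/n^2)$.

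The computation is entirely elementary and there is no genuine obstacle; the only point requiring care is the bookkeeping of error terms, namely verifying that both the $O(1/n)$ error coming from \eqref{sum1/k} and the shift from $\log(n-1)$ to $\log n$ are, after multiplication by the prefactor $2/n$, of size $O(1/n^2)$, so that neither perturbs the two main terms $\frac{2\log n}{n}$ and $\frac{2\gamma}{n}$.
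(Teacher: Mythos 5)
Your proposal is correct and follows essentially the same route as the paper: the partial-fraction identity $\frac{1}{m(n-m)}=\frac{1}{n}(\frac{1}{m}+\frac{1}{n-m})$, the symmetry reducing the sum to $\frac{2}{n}\sum_{m=1}^{n-1}\frac{1}{m}$, the harmonic-sum estimate \eqref{sum1/k}, and the replacement $\log(n-1)=\log n+O(1/n)$. The error bookkeeping is handled exactly as in the paper's proof, so there is nothing to add.
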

\begin{proof}
	Note that, using symmetry and \eqref{sum1/k}, we have
	\begin{align*}
		\sum_{m=1}^{n-1} \frac{1}{m(n-m)} & = \frac{1}{n} \sum_{m=1}^{n-1} \left( \frac{1}{n-m} + \frac{1}{m} \right)  \notag \\
		& = \frac{2}{n} \sum_{m=1}^{n-1} \frac{1}{m} \notag \\
		& = \frac{2 \log(n-1)}{n} + \frac{2 \gamma}{n} + O \left( \frac{1}{n (n-1)} \right).
	\end{align*}
	We complete the proof by noticing that
	as $n \rightarrow \infty$,
	\begin{equation*}
		\log(n-1) = \log n + \log \left( 1 - \frac{1}{n}\right) = \log n + O \left( \frac{1}{n} \right).
	\end{equation*}
\end{proof}
\begin{lma}\label{lemmak2n-k}
	As $n \rightarrow \infty$, we have
	$$\sum_{m=1}^{n-1} \frac{1}{m^2(n-m)} = \frac{\pi^2}{6n} + \frac{2 \log n}{n^2} + O \left( \frac{1}{n^2} \right).$$
\end{lma}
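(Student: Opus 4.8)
The plan is to reduce the summand to a sum of simple rational functions of $m$ via partial fractions, and then to recognize each resulting sum as either a harmonic sum (controlled by \eqref{sum1/k}) or a truncation of the Basel series $\sum_{m\geq 1} m^{-2} = \pi^2/6$. Concretely, I would first decompose
\begin{equation*}
	\frac{1}{m^2(n-m)} = \frac{1}{n^2}\cdot\frac{1}{m} + \frac{1}{n}\cdot\frac{1}{m^2} + \frac{1}{n^2}\cdot\frac{1}{n-m},
\end{equation*}
which one checks by clearing denominators and matching coefficients (the constants $1/n^2$, $1/n$, $1/n^2$ are forced by evaluating at $m=0$, $m=n$, and comparing the $m^2$ coefficient). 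Summing this identity over $1 \leq m \leq n-1$ splits the target sum into three pieces.

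Next I would exploit the symmetry $\sum_{m=1}^{n-1} (n-m)^{-1} = \sum_{m=1}^{n-1} m^{-1}$, obtained by reindexing $j = n-m$, so that the first and third pieces combine into $\tfrac{2}{n^2}\sum_{m=1}^{n-1} m^{-1}$. Invoking \eqref{sum1/k} together with $\log(n-1) = \log n + O(1/n)$ gives $\sum_{m=1}^{n-1} m^{-1} = \log n + \gamma + O(1/n)$, so that this combined piece equals
\begin{equation*}
	\frac{2\log n}{n^2} + \frac{2\gamma}{n^2} + O\!\left(\frac{1}{n^3}\right) = \frac{2\log n}{n^2} + O\!\left(\frac{1}{n^2}\right),
\end{equation*}
the constant term $2\gamma/n^2$ being absorbed into the error.

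For the remaining middle piece $\tfrac{1}{n}\sum_{m=1}^{n-1} m^{-2}$, I would write $\sum_{m=1}^{n-1} m^{-2} = \tfrac{\pi^2}{6} - \sum_{m\geq n} m^{-2}$ and bound the tail by $\sum_{m \geq n} m^{-2} \ll 1/n$ (by comparison with an integral or telescoping $m^{-2} \leq \tfrac{1}{m-1}-\tfrac1m$). Dividing by $n$ yields $\tfrac{\pi^2}{6n} + O(1/n^2)$. Adding the two contributions produces exactly $\tfrac{\pi^2}{6n} + \tfrac{2\log n}{n^2} + O(1/n^2)$, as claimed. There is no substantive obstacle here; the only point requiring a little care is the bookkeeping of error terms, namely checking that each $O$-term is genuinely of size $O(1/n^2)$ after the appropriate power of $n$ is divided out, and that the Basel tail contributes at the $O(1/n^2)$ level rather than spoiling the main term.
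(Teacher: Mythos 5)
Your proof is correct and takes essentially the same route as the paper's: your three-term partial fraction is exactly what results from composing the paper's one-step splitting $\frac{1}{m^2(n-m)}=\frac{1}{n}\bigl(\frac{1}{m(n-m)}+\frac{1}{m^2}\bigr)$ with the symmetry decomposition already used inside \lmaref{lemmakn-k}, and both arguments then finish with \eqref{sum1/k} and the $O(1/n)$ bound on the Basel tail. The only cosmetic difference is that the paper factors the harmonic contribution through \lmaref{lemmakn-k} rather than writing out the full decomposition directly.
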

\begin{proof}
	Notice that
	$$\sum_{m=1}^{n-1} \frac{1}{m^2(n-m)}  = \frac{1}{n} \left( \sum_{m=1}^{n-1} \frac{1}{m(n-m)} + \sum_{m=1}^{n-1} \frac{1}{m^2} \right).$$
	Using
	\begin{equation*}
		\sum_{m=1}^{n-1} \frac{1}{m^2} = \sum_{m=1}^{\infty} \frac{1}{m^2} - \sum_{m = n}^{\infty} \frac{1}{m^2} = \frac{\pi^2}{6} + O \left( \int_n^\infty \frac{dt}{t^2} \right) = \frac{\pi^2}{6} + O \left( \frac{1}{n} \right)
	\end{equation*}
	together with \lmaref{lemmakn-k} completes the proof.
\end{proof}
\begin{lma}\label{sumqk/2}
	As $n \rightarrow \infty$, we have
	\begin{equation*}
		\sum_{m=1}^{n-1} \frac{q^{-m/2}}{m(n-m)} = \frac{\log \left( 1 + \frac{1}{q^{1/2} - 1} \right)}{n} + 
		O \left( \frac{1}{n^2}\right).
	\end{equation*}
\end{lma}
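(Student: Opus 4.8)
The plan is to split the summand by partial fractions and isolate the part producing the main term. Writing
$$\frac{1}{m(n-m)} = \frac{1}{n}\left(\frac{1}{m} + \frac{1}{n-m}\right),$$
we obtain
$$\sum_{m=1}^{n-1} \frac{q^{-m/2}}{m(n-m)} = \frac{1}{n}\sum_{m=1}^{n-1} \frac{q^{-m/2}}{m} + \frac{1}{n}\sum_{m=1}^{n-1} \frac{q^{-m/2}}{n-m}.$$
First I would treat the first sum. Since $q^{-m/2}$ decays geometrically, the series $\sum_{m\ge 1} q^{-m/2}/m$ converges, and by the power series expansion $\sum_{m \ge 1} x^m/m = -\log(1-x)$ with $x = q^{-1/2}$, its value is $-\log\bigl(1 - q^{-1/2}\bigr) = \log\bigl(1 + \tfrac{1}{q^{1/2}-1}\bigr)$, which is exactly the constant appearing in the claimed main term. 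The tail satisfies $\sum_{m \ge n} q^{-m/2}/m \le \tfrac{q^{-n/2}}{n}\sum_{j\ge 0} q^{-j/2} \ll q^{-n/2}/n$, so the first sum contributes $\tfrac{1}{n}\log\bigl(1+\tfrac{1}{q^{1/2}-1}\bigr) + O(q^{-n/2}/n^2)$, and this error is absorbed into $O(1/n^2)$.

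Next I would show that the second sum is $O(1/n)$, so that after the prefactor $1/n$ it contributes only $O(1/n^2)$. To see this, I split the range of summation at $m = n/2$. For $1 \le m \le n/2$ one has $n - m \ge n/2$, whence $\sum_{m \le n/2} q^{-m/2}/(n-m) \le \tfrac{2}{n}\sum_{m \ge 1} q^{-m/2} \ll 1/n$. For $n/2 < m \le n-1$ the factor $q^{-m/2} \le q^{-n/4}$ dominates, while $\sum_{n/2 < m \le n-1} 1/(n-m) \le \sum_{j=1}^{n/2} 1/j \ll \log n$, so this piece is $\ll q^{-n/4}\log n$, which is negligible. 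Combining the two pieces yields the stated estimate.

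The computation is essentially routine; the only points requiring care are recognizing the closed form of the convergent logarithmic series and verifying that both the tail of the first sum and the entirety of the second sum are swallowed by the error term $O(1/n^2)$. I expect the mild bookkeeping around the $m = n/2$ split in the second sum to be the main (and only) obstacle.
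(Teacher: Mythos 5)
Your proof is correct, and its overall skeleton matches the paper's: both start from the partial-fraction identity $\frac{1}{m(n-m)}=\frac{1}{n}\bigl(\frac{1}{m}+\frac{1}{n-m}\bigr)$, both extract the main term by completing $\sum_{m\ge 1}q^{-m/2}/m$ to the convergent series $-\log(1-q^{-1/2})=\log\bigl(1+\frac{1}{q^{1/2}-1}\bigr)$ with a geometrically small tail, and both reduce the problem to showing $\sum_{m=1}^{n-1}q^{-m/2}/(n-m)=O(1/n)$. Where you genuinely diverge is in that last estimate. The paper reindexes the sum as $q^{-n/2}H_n(q^{1/2})$ with $H_n(x)=\sum_{m=1}^{n-1}x^m/m$, derives the algebraic identity $\bigl(1-\frac{1}{x}\bigr)H_n(x)=\frac{x^{n-1}}{n-1}-1+\sum_{m=1}^{n-2}\frac{x^m}{m(m+1)}$, and then invokes its Lemma~\ref{lemmak2n-k} on $\sum_m \frac{1}{m^2(n-m)}$ to conclude. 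You instead split the range at $m=n/2$: on the lower range $n-m\ge n/2$ gives $\ll\frac{1}{n}\sum_{m\ge1}q^{-m/2}\ll\frac1n$, and on the upper range $q^{-m/2}\le q^{-n/4}$ against a harmonic sum gives $\ll q^{-n/4}\log n$. Your route is more elementary and shorter; it avoids the generating-function manipulation and the dependence on the auxiliary lemma entirely, at no cost in the strength of the final error term, since all that is needed from this piece is $O(1/n)$ before the prefactor $1/n$. The paper's method does yield slightly more precise information about that sum (an asymptotic rather than just an upper bound), but none of that extra precision is used in the statement being proved.
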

\begin{proof}
	Notice that
	\begin{align*}
	\sum_{m=1}^{n-1} \frac{q^{-m/2}}{m(n-m)} & = \frac{1}{n} \left( \sum_{m=1}^{n-1} \frac{q^{-m/2}}{n - m} + \sum_{m=1}^{n-1} \frac{q^{-m/2}}{m} \right) \\
	& = \frac{1}{n} \sum_{m=1}^{n-1} \frac{q^{-m/2}}{n - m} + \frac{1}{n}  \sum_{m=1}^{\infty} \frac{q^{-m/2}}{m} + O \left( \frac{q^{-n/2}}{n^2} \right).
	\end{align*}
	For the second sum on the right hand side above, by using
	\begin{align*}
		 - \log (1- x) = \sum_{m=1}^\infty \frac{x^m}{m}\quad\quad \left(\left|x\right|<1\right)
	\end{align*}
	with  $x = q^{-1/2} < 1$, we obtain
	$$ \frac{1}{n} \sum_{m=1}^{\infty} \frac{q^{-m/2}}{m} =  \frac{- \log \left( 1 - q^{-1/2} \right)}{n} = \frac{\log \left( 1 + \frac{1}{q^{1/2}-1}\right)}{n}.$$
	Thus we complete the proof by showing
	$$ \sum_{m=1}^{n-1} \frac{q^{-m/2}}{n - m} = O \left( \frac{1}{n} \right)$$
	We write
	$$\sum_{m=1}^{n-1} \frac{q^{-m/2}}{n - m} = \sum_{m=1}^{n-1} \frac{q^{-(n-m)/2}}{m} = q^{-n/2} H_n(q^{1/2}),$$
	where
	$$H_n(x) = \sum_{m=1}^{n-1} \frac{x^m}{m}.$$
	We then notice that
	$$\left( 1 - \frac{1}{x} \right)H_n(x) = \sum_{m=1}^{n-1} \frac{x^m}{m} - \sum_{m=0}^{n-2} \frac{x^m}{m+1} = \frac{x^{n-1}}{n-1} - 1 + \sum_{m=1}^{n-2} \frac{x^m}{m(m+1)}$$
	which gives
	\begin{align*}
		x^{-n} H_n(x) & = \frac{1}{(n-1)(x-1)} - \frac{x^{-(n-1)}}{x-1} + \frac{x}{x-1} \sum_{m=1}^{n-2} \frac{x^{-(n-m)}}{m(m+1)}.
	\end{align*}
	Taking $x = q^{1/2}$, we obtain
	\begin{align*}
		x^{-n} H_n(x)= O \left( \frac{1}{n} \right) + O \left( \sum_{m=1}^{n-1} \frac{1}{m^2(n-m)} \right)
	\end{align*}
where the implied constants are absolute	and using \lmaref{lemmak2n-k} completes the proof. 

\end{proof}

\section{Proof of Theorem \ref{mainresult_first_moment}}

To prove \thmref{mainresult_first_moment}, we take $g(l) = -1$ for all $l\in P$ in \propref{prop1}. Then we have
	$$a_k(f) := a_{g,k}(f) = \sum_{\substack{l \in P, \ l|f \\ \nu_l(f) \geq k+1 \\ \nu_l(f) - k \text{ odd}}} 1,$$ 
	and note that
	\begin{equation}\label{uselate}
		\omega_1(f) = \omega(f) - a_1(f) - a_2(f).
	\end{equation} 
	Moreover, by the definition of $C_k := C_{g,k}$ in \propref{prop1}, we have 
	$$C_1 = \sum_{l \in P} \frac{1}{|l|(\left|l\right|+1)} \quad \textnormal{ and } \quad C_2 = \sum_{l \in P} \frac{1}{|l|^2(\left|l\right|+1)},$$ 
	and
	\begin{equation*}
		C_1 + C_2 = L(2).
	\end{equation*}
	Thus, by (\ref{first_moment_omega}) and \propref{prop1}, we have
	\begin{align*}
		\sum_{f \in M_n} \omega_1(f) & =  \sum_{f \in M_n} (\omega(f) - a_1(f) - a_2(f)) \\
		& = q^n \Big( \log n + A_1 + O \Big( \frac{1}{n} \Big) \Big) -  \left( C_1 + C_2 \right) q^n + O \left( n \cdot q^{\frac{n}{2}} \right) 
		\\
		& = q^n (\log n) + q^n (A_1 - L(2)) + O \Big( \frac{q^n}{n}\Big).
	\end{align*}
We now consider the cases $k \geqslant 2$. Since
	$$\omega_k(f) = a_{k-1}(f) - a_{k+1}(f)$$ 
	and
	\begin{equation*}
		C_{k-1} - C_{k+1} = \sum_{l \in P} \left( \frac{1}{|l|^{k-1}(\left|l\right|+1)} - \frac{1}{|l|^{k+1}(\left|l\right|+1)} \right) = L(k) - L(k+1),
	\end{equation*}
by \propref{prop1}, the result follows. 
\qed
\section{Proof of Theorem \ref{mainresult_second_moment}}
To prove \thmref{mainresult_second_moment}, we start with the first moment of $\omega_1$. For $l \in P$ and $f \in M$, and a positive integer $k$, we use $l^k || f$ to denote that $l^k | f$ and $l^{k+1} \nmid f$. We have
\begin{align}\label{Expanding_square}
	\sum_{f \in M_n} \omega_1^2(f) = \sum_{f \in M_n} \left( \sum_{\substack{l \in P \\ l || f}} 1 \right)^2 = \sum_{f \in M_n}  \sum_{\substack{l,h \in P \\ l||f, \ h||f }}  1 = \sum_{f \in M_n} \omega_1(f) + \sum_{f \in M_n}  \sum_{\substack{l,h \in P, \ l \neq h \\ l ||f , \ h || f}}  1 .
\end{align}
The first sum on the right-hand side above can be estimated by Theorem \ref{mainresult_first_moment}. For the second sum on the right-hand side above, we have 
\begin{equation*}\label{sec2}
	\sum_{f \in M_n}  \sum_{\substack{l,h \in P, \ l \neq h \\ l ||f , \ h || f}}  1  = \sum_{\substack{l,h \in P, \ l \neq h \\ \textnormal{deg}(l), \textnormal{deg}(h) \leq n}}  \sum_{\substack{f \in M_n \\ l || f , \ h || f}} 1, 
\end{equation*}
and the inner sum on the right-hand side can be written as 
\begin{equation*}
	\sum_{\substack{f \in M_n \\ l || f, \ h || f}} 1  = \sum_{\substack{f \in M_n  \\ (lh) | f }} 1 \ - \sum_{\substack{f \in M_n \\ (l^2 h) | f }} 1 \ - \sum_{\substack{f \in M_n  \\ (l h^2) | f}} 1 + \sum_{\substack{f \in M_n  \\ (l^2 h^2) | f}} 1. 
\end{equation*}
For $w \in M$ with $\deg(w) \leqslant n$, the number of multiples of $w$ in $M_n$ is $\frac{q^n}{\left|w\right|}$, and thus we have 
\begin{equation*}
	\sum_{\substack{l,h \in P, \ l \neq h \\ \textnormal{deg}(l), \textnormal{deg}(h) \leq n}}  \sum_{\substack{f \in M_n \\ l || f , \ h || f}} 1  = q^n (S_1(n) - 2 S_2(n) + S_3(n)), 
\end{equation*}
where 
\begin{equation*}
	S_1(n) =  \sum_{\substack{l,h \in P, \ l \neq h \\ \textnormal{deg}(lh) \leq n}} \frac{1}{|l||h|},
\end{equation*}
\begin{equation*}
	S_2(n) =  \sum_{\substack{l,h \in P, \ l \neq h \\ \textnormal{deg}(l^2 h) \leq n}} \frac{1}{|l|^2 |h|},
\end{equation*}
\begin{equation*}
	S_3(n) =  \sum_{\substack{l,h \in P, \ l \neq h \\ \textnormal{deg}(l^2 h^2) \leq n}} \frac{1}{|l|^2|h|^2}.
\end{equation*}

Now we show that 
\begin{align}\label{S_1_result}
		S_1(n) = \log^2 n + 2 A_1 \log n+ A_1^2 - \frac{\pi^2}{6} - L(2)  + O \left( \frac{\log n}{n}\right).
\end{align}
We have 
\begin{align}\label{Eq1_for_S_1}
	S_1(n) = \sum_{\substack{l,h \in P \\ \textnormal{deg}(ls) \leq n}} \frac{1}{|l||h|} - \sum_{\substack{l\in P \\ \textnormal{deg}(l) \leq  n/2}} \frac{1}{|l|^2}.
\end{align}
For the last term above, we have
$$\sum_{\substack{l \in P \\ \textnormal{deg}(l) \leq n/2}} \frac{1}{|l|^2} = \sum_{l \in P} \frac{1}{|l|^2} +  O \left(\sum_{m>n/2}\frac{q^{-m}}{m}\right)
= L(2) + O \left( \frac{q^{- n/2}}{n} \right)$$
since 
$\pi_q(m) = q^m/m + O(q^{m/2}/m)$ for $m \geq 1$. Moreover, we have
	\begin{equation*}
		\sum_{\substack{l \in P \\ \textnormal{deg}(l) \leq n-1}} \frac{1}{|l| (n - \textnormal{deg}(l))} =  \sum_{m=1}^{n-1} \frac{1}{m(n-m)} + O \left( \sum_{m=1}^{n-1} \frac{q^{-m/2}}{m(n-m)} \right) = \frac{2 \log n}{n} + O \left( \frac{1}{n} \right)
	\end{equation*}
	by \lmaref{lemmakn-k} and \lmaref{sumqk/2}. Thus for the first term on the right-hand side of (\ref{Eq1_for_S_1}), by Lemma \ref{Lemma_Mertens}, we have 
\begin{align*}
		\sum_{\substack{l,h \in P \\ \textnormal{deg}(lh) \leq n}} \frac{1}{|l||h|} &= \sum_{\substack{l \in P \\ \textnormal{deg}(l) \leq n-1}} \frac{1}{|l|}\sum_{\substack{h \in P \\ \textnormal{deg}(h) \leq n - \text{deg}(l)}} \frac{1}{|h|}
		\\&=\sum_{\substack{l \in P \\ \textnormal{deg}(l) \leq n-1}} \frac{1}{|l|} \left( \log(n - \text{deg}(l)) + A_1 + O \left( \frac{1}{n - \text{deg}(l)} \right) \right) \\
		& = \sum_{\substack{l \in P \\ \textnormal{deg}(l) \leq n-1}} \frac{\log(n - \text{deg}(l))}{|l|} + A_1\log n+A_1^2 + O \left( \frac{\log n}{n} \right).
\end{align*}
Again by Lemma \ref{Lemma_Mertens}, we have 
\begin{align*}
	\sum_{\substack{l \in P \\ \textnormal{deg}(l) \leq n-1}} \frac{\log(n - \text{deg}(l))}{|l|}
	&=\log n\sum_{\substack{l \in P \\ \textnormal{deg}(l) \leq n-1}}\frac{1}{\left|l\right|}+\sum_{\substack{l \in P \\ \textnormal{deg}(l) \leq n-1}}\frac{\log\left(1-\frac{\deg (l)}{n}\right)}{\left|l\right|}
	\\&=\log^2 n+A_1\log n +\sum_{\substack{l \in P \\ \textnormal{deg}(l) \leq n-1}}\frac{\log\left(1-\frac{\deg (l)}{n}\right)}{\left|l\right|}
	+O\left(\frac{\log n}{n}\right).
\end{align*}
Since $\pi_{q}(m)=\frac{q^m}{m}+O\left(\frac{q^{m/2}}{m}\right)$, we have 
\begin{align*}
	\sum_{\substack{l \in P \\ \textnormal{deg}(l) \leq n-1}}\frac{\log\left(1-\frac{\deg (l)}{n}\right)}{\left|l\right|}&=\sum_{1\leqslant m \leqslant n-1}\pi_q(m)\frac{\log\left(1-\frac{m}{n}\right)}{q^m}
	\\&=\sum_{1\leqslant m \leqslant n-1}\frac{\log\left(1-\frac{m}{n}\right)}{m}+O\left(\frac{1}{n}\right).
\end{align*}
Either by using Riemann sums for the negative decreasing function $F(x):=\frac{\log(1-x)}{x}$ on $[0,1]$ and the fact that $\int_{0}^{1}F(x)\,dx=-\frac{\pi^2}{6}$, or by using the Taylor series of $\log\left(1-\frac{m}{n}\right)$ and the bound $\left|\sum_{m\leqslant r}m^j-\frac{r^{j+1}}{j+1} \right|\leqslant r^j$ for all natural numbers $j$ and $r$ (which can be proved by induction on $j$), one can show that 
\begin{align*}
		\sum_{1\leqslant m \leqslant n-1}\frac{\log\left(1-\frac{m}{n}\right)}{m}=-\frac{\pi^2}{6}+O\left(\frac{\log n}{n}\right).	
	\end{align*}
Combining the results above, we obtain the estimate in (\ref{S_1_result}) for $S_1(n)$.

Now, we consider $S_2(n)$. We show that 
\begin{align}\label{S_2_result}
		S_2(n) = L(2)\log n + A_1 L(2) - L(3) + O \left( \frac{1}{n} \right).
\end{align}
We have 
\begin{align*}
		S_2(n) 
		=\sum_{\substack{l,h \in P \\ \textnormal{deg}(l^2 h) \leq n}} \frac{1}{|l|^2|h|} - \sum_{\substack{l \in P \\ \textnormal{deg}(l) \leq n/3}} \frac{1}{|l|^3}
		= \sum_{\substack{l,h \in P \\ \textnormal{deg}(l^2 h) \leq n}} \frac{1}{|l|^2 |h|} - L(3) + O \left( \frac{q^{-2n/3}}{n} \right).
\end{align*}
Since by \lmaref{sumqk/2},
\begin{align*}
	\sum_{\substack{l,h \in P \\ \textnormal{deg}(l^2 h) \leq n}} \frac{1}{|l|^2 |h|}& = \sum_{\substack{h \in P \\ \textnormal{deg}(h) \leq n-2}} \frac{1}{|h|} \left( \sum_{\substack{l \in P \\ \textnormal{deg}(l) \leq \frac{n - \text{deg}(h)}{2}}} \frac{1}{|l|^2} \right)
	\\&=L(2) \sum_{\substack{h \in P \\ \textnormal{deg}(h) \leq n-2}} \frac{1}{|h|} + O \left( \sum_{\substack{h \in P \\ \textnormal{deg}(h) \leq n-2}} \frac{q^{-\frac{n - \text{deg}(h)}{2}}}{|h| (n - \text{deg}(h))} \right)
	\\&=L(2) (\log n + A_1) + O \left( \frac{1}{n} \right)+O \left( \sum_{m=1}^{n-1} \frac{q^{-\frac{m}{2}}}{m (n-m)} - \frac{q^{-1/2}}{n-1} \right) 
	\\&=L(2) (\log n + A_1) + O \left( \frac{1}{n} \right),
\end{align*}
we obtain the estimate in (\ref{S_2_result}) for $S_2(n)$.

Now we consider $S_3(n)$ for which we show that
\begin{align}\label{S_3_result}
	S_3(n) = L(2)^2 - L(4) + O  \left( \frac{1}{n} \right).
\end{align}
We have
\begin{align*}
		S_3(n) 
		=\sum_{\substack{l,h \in P \\ \textnormal{deg}(l^2 h^2) \leq n}} \frac{1}{|l|^2 |h|^2} - \sum_{\substack{l \in P \\ \textnormal{deg}(l) \leq n/4}} \frac{1}{|l|^4}
		= \sum_{\substack{l,h \in P \\ \textnormal{deg}(l^2 h^2) \leq n}} \frac{1}{|l|^2 |h|^2} - L(4) + O \left( \frac{q^{-3n/4}}{n} \right).
\end{align*}
Since
\begin{align*}
	\sum_{\substack{l,h \in P \\ \textnormal{deg}(l^2 h^2) \leq n}} \frac{1}{|l|^2 |h|^2} &= \sum_{\substack{l \in P \\ \textnormal{deg}(l) \leq \frac{n-2}{2}}} \frac{1}{|l|^2} \left( \sum_{\substack{h \in P \\ \textnormal{deg}(h) \leq \frac{n - 2 \text{deg}(l)}{2}}} \frac{1}{|h|^2} \right) \\
	&=L(2) \left( L(2) + O \left( \frac{q^{-(n-2)/2}}{n} \right) \right) + O \left( \frac{1}{n} \right)
	\\&=L(2)^2+O\left(\frac{1}{n}\right),
\end{align*}
we obtain the estimate in (\ref{S_3_result}) for $S_3(n)$. Hence, by Theorem \ref{mainresult_first_moment}, (\ref{Expanding_square}), (\ref{S_1_result}), (\ref{S_2_result}), and (\ref{S_3_result}), we obtain the first assertion in Theorem \ref{mainresult_second_moment} about the second moment of $\omega_1$.

Now we consider the second moment of $\omega_k$ for $k\geqslant 2$. We have 
\begin{align}\label{expanding_square_k>1}
		\nonumber \sum_{f \in M_n} \omega_k^2(f)&=\sum_{f \in M_n} \omega_k(f) + \sum_{f \in M_n} \left( \sum_{\substack{l,h \in P, \ l \neq h \\ l^k || f, \ h^k || f}} 1 \right)
		\\\nonumber&=\sum_{f \in M_n} \omega_k(f)+ \sum_{\substack{l,h \in P, \ l \neq h \\ \text{deg}(lh) \leq n/k}} \left( \sum_{\substack{f \in M_n \\ (lh)^k | f}} 1 -  \sum_{\substack{f \in M_n \\ l^{k+1}h^k | f}} 1 - \sum_{\substack{f \in M_n \\ l^k h^{k+1} | f}} 1 + \sum_{\substack{f \in M_n \\ (lh)^{k+1} | f}} 1 \right).
		\\&=\sum_{f \in M_n} \omega_k(f)+q^n \sum_{\substack{l,h \in P, \ l \neq h \\ \text{deg}(lh) \leq n/k}} \left( \frac{1}{|l|^k} - \frac{1}{|l|^{k+1}} \right) \left( \frac{1}{|h|^k} - \frac{1}{|h|^{k+1}} \right).
\end{align}
We have 
\begin{align*}
	& \sum_{\substack{l,h \in P, \ l \neq h \\ \text{deg}(lh) \leq n/k}} \left( \frac{1}{|l|^k} - \frac{1}{|l|^{k+1}} \right) \left( \frac{1}{|h|^k} - \frac{1}{|h|^{k+1}} \right) \\
	& = \sum_{\substack{l,h \in P \\ \text{deg}(lh) \leq n/k}} \left( \frac{1}{|l|^k} - \frac{1}{|l|^{k+1}} \right) \left( \frac{1}{|h|^k} - \frac{1}{|h|^{k+1}} \right)
	 - \sum_{\substack{l \in P \\ \text{deg}(l) \leq n/2k}} \left( \frac{1}{|l|^k} - \frac{1}{|l|^{k+1}} \right)^2
\end{align*}
and 
\begin{align*}
	\sum_{\substack{l \in P \\ \text{deg}(l) \leq n/2k}} \left( \frac{1}{|l|^k} - \frac{1}{|l|^{k+1}} \right)^2 &= \sum_{l \in P} \left( \frac{1}{|l|^k} - \frac{1}{|l|^{k+1}} \right)^2 - \sum_{\substack{l \in P \\ \text{deg}(l) > n/2k}} \left( \frac{1}{|l|^k} - \frac{1}{|l|^{k+1}} \right)^2
		\\&= L(2k) + L(2k+2) - 2L(2k+1)
+O \left( \frac{q^{-\frac{n}{2k} (2k-1)}}{n}\right).
\end{align*}
Moreover, we have 
\begin{align*}
	&\sum_{\substack{l,h \in P \\ \text{deg}(lh) \leq n/k}} \left( \frac{1}{|l|^k} - \frac{1}{|l|^{k+1}} \right) \left( \frac{1}{|h|^k} - \frac{1}{|h|^{k+1}} \right) \notag 
	\\& = \sum_{\substack{l \in P \\ \text{deg}(l) \leq (n/k) -1}} \left( \frac{1}{|l|^k} - \frac{1}{|l|^{k+1}} \right) \sum_{\substack{h \in P \\ \text{deg}(h) \leq (n/k) - \text{deg}(l)}} \left( \frac{1}{|h|^k} - \frac{1}{|h|^{k+1}} \right) \notag \\
	& = \sum_{\substack{l \in P \\ \text{deg}(l) \leq (n/k) -1}} \left( \frac{1}{|l|^k} - \frac{1}{|l|^{k+1}} \right) \left( L(k) - L(k+1) + O \left( \frac{q^{-(k-1) \left(\frac{n}{k} - \text{deg}(l) \right)}}{(n/k) - \text{deg}(l)} \right) \right).
\end{align*}
The contribution of the error term above is bounded by 
\begin{align*}
		& \ll q^{-\frac{n(k-1)}{k}} \sum_{\substack{l \in P \\ \text{deg}(l) \leq \lfloor n/k \rfloor -1}} \frac{1}{|l| \left( \lfloor n/k \rfloor - \text{deg}(l) \right)}  \notag \\
	& \ll \frac{\log \lfloor n/k \rfloor}{\lfloor n/k \rfloor \cdot q^{\frac{n}{k}(k-1)}}  \\
	& \ll \frac{\log n}{n \cdot q^{\frac{n}{k}(k-1)}}.
\end{align*}
Since 
\begin{align*}
	\sum_{\substack{l,h \in P \\ \text{deg}(lh) \leq (n/k)-1}} \left( \frac{1}{|l|^k} - \frac{1}{|l|^{k+1}} \right) = L(k) - L(k+1) + O \left( \frac{1}{n \cdot q^{\frac{n}{k}(k-1)}} \right),
\end{align*}
we have 
\begin{align*}
		\sum_{\substack{l,h \in P \\ \text{deg}(lh) \leq n/k}} \left( \frac{1}{|l|^k} - \frac{1}{|l|^{k+1}} \right) \left( \frac{1}{|h|^k} - \frac{1}{|h|^{k+1}} \right) = (L(k) - L(k+1))^2 +  O \left( \frac{\log n}{n \cdot q^{\frac{n}{k}(k-1)}} \right).
\end{align*}
Hence, by Theorem \ref{mainresult_first_moment}, (\ref{expanding_square_k>1}), and the estimates above, we obtain the second assertion in Theorem \ref{mainresult_second_moment}.
\qed
%

%
\section{Proof of Theorem \ref{mainresult_normal_order_k=1}}
We now show how \thmref{mainresult_normal_order_k=1} follows from \thmref{mainresult_first_moment} and \thmref{mainresult_second_moment}.
\begin{proof}
	Note that
	$$\sum_{f \in M_n} (\omega_1(f) - \log n)^2 = \sum_{f \in M_n} \omega_1^2(f) + \log^2 n\sum_{f \in M_n}1  - 2 \log n \sum_{f \in M_n} \omega_1(f).$$
	Applying \thmref{mainresult_first_moment} and \thmref{mainresult_second_moment} 
completes the first part of the proof. We thus have
	\begin{equation}\label{needed2}
		\sum_{f \in M_n} (\omega_1(f) - \log n)^2 \ll q^n \log n.
	\end{equation}
	Let $E_n$ be the set of monic polynomials $f$ of degree $n$ such that 
	$$\frac{|\omega_1(f) - \log n|}{\sqrt{\log n}} \geq (\log n)^{\epsilon'},\quad \quad (0<\epsilon'<1/2).$$
	Let $|E_n|$ be the cardinality of $E_n$. Thus we have
	$$\sum_{f \in M_n} (\omega_1(f) - \log n)^2 \geq \sum_{f \in E_n} (\omega_1(f) - \log n)^2 \geq \sum_{f \in E_n} (\log n)^{1+2\epsilon'} = (\log n)^{1+2\epsilon'} |E_n|$$
	which together with \eqref{needed2} yields
	$$|E_n| \ll \frac{q^n}{(\log n)^{2\epsilon'}} \ \text{ as } \ n \rightarrow \infty.$$
	This proves $|E_n| = o(q^n)$ as $n \rightarrow \infty$. Let $\epsilon > 0$ be any real number. Since $\epsilon' < 1/2$, there exist $n_0 \in \mathbb{N}$ such that $(\log n)^{-(\frac{1}{2} - \epsilon')} \leq \epsilon$ for all $n \geq n_0$. Hence as $n \rightarrow \infty$,
	the number of polynomials $f$ of degree $n$ satisfying
	$$\frac{|\omega_1(f) - \log n|}{\log n} \geq \epsilon $$
	is $o(q^n)$ which implies that $\omega_1(f)$ has  normal order $\log(\text{deg}(f))$.

\end{proof}

\section{Proof of Theorem \ref{mainresult_no_normal_order_k>1}}

\begin{proof}
First  assume that there exist a polynomial $f_0$ of degree $n_0$ such that $G(f_0) >0$. Then $G(f) > 0$ for all $f \in M_n$ with $n > n_0$. 
	For $n \in \mathbb{N}$, we define
	$$\mathcal{N}_0(n) = \{ f \in M_n \ | \ \omega_k(f) = 0 \}.$$ 
	Using $q \geq 2$, $|M_2|-\pi_q(2)= (q^2 + q)/2$, and $|M_3| - \pi_q(3) = (2q^3+q)/3$, we obtain
	\begin{align*}
		\sum_{l \in P} \frac{1}{|l|^2} 
		 & \leq \sum_{f \in M} \frac{1}{|f|^2} - 1 - q^{-4} \left( q^2 - \frac{q^2 - q}{2} \right) - q^{-6} \left( q^3 - \frac{q^3 - q}{3} \right) \notag \\
	& \leq \zeta_q(2) - 1 - \frac{q+1}{2q^3} - \frac{2q^2+1}{3q^5} \notag \\
	& = \frac{q}{q-1} - 1 - \frac{q+1}{2q^3} - \frac{2q^2+1}{3q^5} \notag \\
	& \leq \frac{23}{32}.
	\end{align*}
	Thus, we deduce
	\begin{align*}
	|\mathcal{N}_0(n)| = q^n - \sum_{\substack{f \in M_n \\ f \notin \mathcal{N}_0(n)}} 1 
	\geq q^n - \sum_{l \in P} \sum_{\substack{f \in M_n \\ l^k || f}} 1 
	\geq  q^n -  q^n \sum_{l \in P} \frac{1}{|l|^k} 
	\geq \frac{9}{32} q^n.
	\end{align*}
	Thus the number of monic polynomials $f$ of degree $n$ for which $G(f) >0$ and $\omega_k(f) = 0$ is not $o(q^n)$, and for such an $f$,
	\begin{equation}\label{omegaG}
	|\omega_k(f) - G(f)| > \frac{G(f)}{2}
	\end{equation}
	is satisfied. This proves that $\omega_k(f)$ does not have  normal order $G(f)$ when $G(f)$ is not identically 0.
	
	Next, we suppose $G(f) = 0$ for all $f \in M$. For $n \in \mathbb{N}$, we define
	$$\mathcal{N}_1(n) = \{ f \in M_n \ | \ \omega_k(f) = 1 \}.$$
	Let $h \in P_1$ denote a fixed monic irreducible of degree 1. We use this $h$ to obtain
	\begin{align*}
	|\mathcal{N}_1(n)| \geq \sum_{\substack{f \in M_n, \ \nu_{h}(f) = k \\ \nu_l(f) < k \ \forall l \in P \text{ with } l \neq h}} 1  
	\geq q^{n-k} \left( 1 - q^{-1} -  \sum_{\substack{l \in P \\ l \neq h}} \frac{1}{|l|^k} \right) 
	\geq \frac{1}{32} q^{n-k}.
	\end{align*}
	Thus the number of monic polynomials $f$ of degree $n$ for which $G(f) = 0$ and $\omega_k(f) = 1$ is not $o(q^n)$, and all such $f$'s again satisfy \eqref{omegaG}. 
	This completes the proof.
\end{proof}
\section{Proof of Theorem \ref{mainresult_Erdos_Kac_k=1}}

We begin this section by proving a second version of Erd\H{o}s-Kac theorem for function fields which is a consequence of the first version given in \eqref{kacerdosver1}.
\begin{thm}\label{ver2}(Erd\H{o}s-Kac theorem for function fields - Version II)
	Let $a \in \mathbb{R}$. Then
	$$\lim_{n \rightarrow \infty} \frac{1}{q^n} \left| \left\{ f \in M_n \ \Big| \ \frac{\omega(f) - \log n}{\sqrt{\log n}} \leq a \right\} \right| = \Phi(a)$$
	where $\Phi(a)$ is defined in \eqref{Nadef}. 
\end{thm}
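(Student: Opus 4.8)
The plan is to deduce the exact-degree statement of \thmref{ver2} from Zhang's averaged version \eqref{kacerdosver1} by a simple telescoping argument that exploits the geometric growth of $|M_m| = q^m$. For each $m$ large enough that the normalization is defined, write
$$N_m(a) := \left| \left\{ f \in M_m \ \Big| \ \frac{\omega(f) - \log m}{\sqrt{\log m}} \leq a \right\} \right|,$$
so that the numerator in \eqref{kacerdosver1} equals $T_n := \sum_{m \leq n} N_m(a)$ up to the $O(1)$ contribution of the finitely many small-degree polynomials for which $\log m$ is not positive, while the denominator equals $S_n := \sum_{m \leq n} q^m$ up to a bounded additive constant. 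With this notation, \eqref{kacerdosver1} reads $T_n / S_n \to \Phi(a)$ as $n \to \infty$, and the goal is to upgrade this to $N_n(a)/q^n \to \Phi(a)$.

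First I would record the two facts that drive the inversion: the increment of the partial sums of the weights is $S_n - S_{n-1} = q^n$, and $S_n = \frac{q}{q-1}\,q^n + O(1) \asymp q^n$, so that $S_n$ and $S_{n-1}$ are both $O(q^n)$. The key structural point is that, because the weights $q^m$ grow geometrically, each increment $q^n$ is comparable in size to the entire partial sum $S_n$; this is precisely what allows one to recover the individual term $N_n(a)$ from the weighted averages without invoking any Tauberian hypothesis.

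Next, from $T_n/S_n \to \Phi(a)$ I would write $T_n = \Phi(a)\, S_n + o(S_n)$ and $T_{n-1} = \Phi(a)\, S_{n-1} + o(S_{n-1})$. Since $N_n(a) = T_n - T_{n-1}$, subtracting gives
$$N_n(a) = \Phi(a)(S_n - S_{n-1}) + o(S_n) + o(S_{n-1}) = \Phi(a)\, q^n + o(q^n),$$
where the final error bound uses $S_n, S_{n-1} = O(q^n)$. Dividing by $q^n$ yields $N_n(a)/q^n \to \Phi(a)$, which is exactly the assertion of \thmref{ver2}. The finitely many omitted small-degree terms alter $T_n$ by $O(1) = o(q^n)$ and therefore do not affect the limit.

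I do not anticipate a serious obstacle here; the only point requiring genuine care is the comparison of magnitudes, and the essential observation — the reason no Tauberian machinery is needed — is that geometric weights make the increments $q^n$ of the same order as the partial sums $S_n$, so the ratio statement $T_n/S_n \to \Phi(a)$ transfers directly to $N_n(a)/q^n \to \Phi(a)$. One should also verify that the normalization $\frac{\omega(f) - \log m}{\sqrt{\log m}}$ is applied with the degree $m$ of each individual polynomial in both versions, so that extracting the top-degree block from \eqref{kacerdosver1} produces precisely $N_n(a)$ with the normalization $\log n$ demanded by \thmref{ver2}.
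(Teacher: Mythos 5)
Your proof is correct and takes essentially the same approach as the paper: both deduce the exact-degree statement from Zhang's cumulative version \eqref{kacerdosver1} by differencing consecutive partial sums and using that the geometric weights make the increment $q^n$ comparable to the whole partial sum $\sum_{m\leq n}q^m$. The paper phrases this as a two-term decomposition of the ratio $|\mathscr{Z}(n)|/|\{f \in M_m,\ m\leq n\}|$ rather than an explicit subtraction $T_n - T_{n-1}$, but the content is identical.
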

\begin{proof}
	Let us denote the set
	$$\mathscr{Z}(n) := \left\{ f \ \Big| \ f \in M_m, \ m \leq n, \  \frac{\omega(f) - \log m}{\sqrt{\log m}} \leq a \right\}.$$
	Therefore
	\begin{align*}\label{eqreq1}
		\frac{|\mathscr{Z}(n)|}{| \{ f \ | \ f \in M_m, \ m \leq n \} |} & = \frac{|\mathscr{Z}(n-1)|}{| \{ f \ | \ f \in M_m, \ m \leq n-1 \} |} \frac{| \{ f \ | \ f \in M_m, \ m \leq n-1 \} |}{| \{ f \ | \ f \in M_m, \ m \leq n \} |} \notag \\
		& \quad + \frac{\left| \left\{ f \in M_n \ \Big| \  \frac{\omega(f) - \log n}{\sqrt{\log n}} \leq a \right\} \right|}{q^n} \frac{q^n}{| \{ f \ | \ f \in M_m, \ m \leq n \} |}.
	\end{align*}
	Taking the limits $n \rightarrow \infty$ on both sides and using \eqref{kacerdosver1} completes the proof.
\end{proof}
\begin{proof}[Proof of \thmref{mainresult_Erdos_Kac_k=1}.]
	For  $f \in M_n$ and a function $g: M \rightarrow \mathbb{R}_{\geq 0}$, we define
	$$r_g(f) := \frac{g(f)- \log n}{\sqrt{\log n}}$$
	and 
	$$D(g,n,a) := \frac{1}{q^n} | \{ f \in M_n \ | \ r_g(f) \leq a \} |.$$
	Note that $D(\omega,n,a) \leq D(\omega_1, n, a)$. Thus, by \thmref{ver2}, we obtain 
	\begin{equation}\label{erka1}
		\Phi(a) \leq \liminf_{n \rightarrow \infty} D(\omega_1,n,a).
	\end{equation}
	Let $\epsilon > 0$ be a real number. Let us denote the set 
	$$\mathcal{D}(n,\epsilon) = \left\{ f \in M_n \ \Big| \ \frac{\omega(f) - \omega_1(f)}{\sqrt{\log n}} \leq \epsilon \right\} .$$
	Notice that
	\begin{equation}\label{kaer2}
		D(\omega_1,n,a) \leq D(\omega, n, a + \epsilon) + \frac{1}{q^n} \left| M_n \backslash \mathcal{D}(n,\epsilon) \right|.
	\end{equation}
	By \eqref{uselate} and the calculation in the proof of \thmref{mainresult_first_moment}, we obtain
	\begin{align*}
		\sum_{f \in M_n \backslash \mathcal{D}(n,\epsilon)} (\omega(f) - \omega_1(f)) 
		& \leq \sum_{f \in M_n} (a_1(f) + a_2(f)) \\
		& = L(2) q^n + O \left( n \cdot q^{\frac{n}{2}} \right).
	\end{align*}
	Moreover, 
	we have 
	$$\sum_{f \in M_n \backslash \mathcal{D}(n,\epsilon)} (\omega(f) - \omega_1(f)) > \epsilon \sqrt{\log n} \ \left| M_n \backslash \mathcal{D}(n,\epsilon)  \right|$$
	which together with the previous inequality yields
	$$\left| M_n \backslash \mathcal{D}(n,\epsilon) \right| \ll \frac{q^n}{\epsilon \sqrt{\log n}}.$$
	Thus $\left| M_n \backslash \mathcal{D}(n,\epsilon) \right| = o(q^n)$ as $n \rightarrow \infty$. Using \eqref{kaer2} and \thmref{ver2} with $a+\epsilon$ instead of $a$, we deduce that
	$$ \limsup_{n \rightarrow \infty} D(\omega_1,n,a) \leq \Phi(a + \epsilon).$$
	Since $\epsilon$ is arbitrary, combining the above inequality with \eqref{erka1} completes the proof.
\end{proof}

\section{Acknowledgments}
The authors would like to thank the referees for providing valuable comments to improve the paper.

\bibliographystyle{plain} 

\Addresses

\end{document}